    \setlist{nosep}
    \pgfplotsset{compat=newest}
\let\oldeqref\eqref
\renewcommand{\eqref}[1]{\oldeqref{eq:#1}}
\mathchardef\ordinarycolon\mathcode`\:
\DeclareMathAlphabet{\altmathcal}{OMS}{cmsy}{m}{n}
\renewcommand{\mathcal}{\altmathcal}
\newcommand{\norm}[1]{\Vert #1 \Vert}
\newcommand{\RR}{\mathbb{R}}
\newcommand{\CC}{\mathbb{C}}
\newcommand{\und}{~~ \text{and} ~~}
\newcommand{\spectrum}{\sigma}
\newcommand{\inv}{^{-1}}
\newcommand{\tp}{^{\mathsf{T}}}
\newcommand{\mtp}{^{-\mathsf{T}}}
\newcommand{\hm}{^{\mathsf{H}}}
\newcommand{\hatX}{\hat{X}}
\newcommand{\hatY}{\hat{Y}}
\newcommand{\hinf}{{\mathcal{H}_\infty}}
\renewcommand{\tilde}{\widetilde}
\renewcommand{\hat}{\widehat}
\renewcommand{\Re}{\mathrm{Re}}
\newcommand{\matlab}{MATLAB\textsuperscript{\textregistered}\xspace}
\newcommand{\pH}{\textsf{pH}\xspace}
\newcommand{\portHamiltonian}{port-Ha\-mil\-to\-ni\-an\xspace}
\newcommand{\PortHamiltonian}{Port-Ha\-mil\-to\-ni\-an\xspace}
\pgfplotsset{
  custom y log style/.style={
      yticklabel={
        \pgfkeys{/pgf/fpu=true}
        \pgfmathparse{exp(\tick)}%
        \pgfmathprintnumber[fixed,fixed zerofill, precision=2]{\pgfmathresult}
        \pgfkeys{/pgf/fpu=false}
      }
  }
}
\definecolor{mylinkcolor}{rgb}{0.031,0.2,0.369}
\definecolor{myemphcolor}{rgb}{0.2,0.2,0.2}
\title{Structure-Preserving $\hinf$ Control \\ for \PortHamiltonian Systems}
\theoremstyle{plain}
\newtheorem{theorem}{Theorem}
\newtheorem{example}[theorem]{Example}
\newtheorem{proposition}[theorem]{Proposition}
\newtheorem{remark}[theorem]{Remark}
\begin{document}

\maketitle

 \centerline{\scshape Tobias Breiten}
 \medskip
 {\footnotesize
  \centerline{Institute of Mathematics}
    \centerline{Technische Universit\"at Berlin}
    \centerline{Stra\ss e des 17. Juni 136, 10623 Berlin, Germany}
    \centerline{tobias.breiten@tu-berlin.de}
 }

 \medskip

 \medskip
  \centerline{\scshape Attila Karsai}
 \medskip
 {\footnotesize
  \centerline{Institute of Mathematics}
    \centerline{Technische Universit\"at Berlin}
    \centerline{Stra\ss e des 17. Juni 136, 10623 Berlin, Germany}
    \centerline{karsai@math.tu-berlin.de}
 }

\bigskip

\begin{abstract}
    We study $\hinf$ control design for linear time-invariant \portHamiltonian systems.
    By a modification of the two central algebraic Riccati equations, we ensure that the resulting controller will be \portHamiltonian.
    Using these modified equations, we proceed to show that a corresponding balanced truncation approach preserves \portHamiltonian structure.
    We illustrate the theoretical findings using numerical examples and observe that the chosen representation of the \portHamiltonian system can have an influence on the approximation qualities of the reduced order model.
\end{abstract}

{\em Keywords: port-Hamiltonian systems, $\hinf$ control design, model order reduction}

\section{Introduction}
Linear systems are an important tool in mathematical modeling.
Large model classes can be written in a linear form, and many nonlinear systems can be linearized around equlibria to obtain a linear approximation.
Control of such systems is an essential part of many applications.
A common approach is control by interconnection, where the original system, the \emph{plant}, is connected to a second linear system, the \emph{controller}.
Two well-known examples of this technique are linear quadratic Gaussian (LQG) control~\cite{KalB61} and $\hinf$ control~\cite{DoyGKF89}.
The latter is particularly interesting for real-world applications due to the poor robustness properties of LQG control~\cite{Doy78}.

Often, the considered linear systems have additional mathematical properties that can be interpreted physically, such as, for example, passivity or \portHamiltonian structure.
Although the roots of \portHamiltonian (\pH) systems theory date back as far as the late 1950s~\cite{VanJ14}, they continue to be the focus of active research.
For an overview of \portHamiltonian systems, see, e.g.,~\cite{MehU22,Van06,VanJ14} and the references therein.
In the context of control methods, the property that power-conserving interconnections of \pH systems can again be formulated as \portHamiltonian systems~\cite{Van00,Van06} is particularly important.
Unfortunately, classical LQG and $\hinf$ control do not necessarily yield \portHamiltonian controllers, even when the considered plant is \portHamiltonian.
Similarly, they do not preserve other important properties of the plant, which has lead to the development of modified techniques in the past, see for example~\cite{BreMS21,LozJ88} for the LQG setting and~\cite{HadBW93} for the $\hinf$ setting.

Our contribution is the introduction of a structure-preserving controller synthesis method which ensures that the resulting closed loop transfer function stays within a prescribed $\hinf$ margin.
To achieve this goal, we take a similar approach as~\cite{HadBW93} and make use of a result established in~\cite{BerH89}.
\Cref{thm:mhinf} demonstrates how the algebraic Riccati equations used for classical $\hinf$ control need to be altered to ensure that the resulting controller has \portHamiltonian structure.
As in~\cite{BreMS21}, the Hamiltonian of the \pH plant plays an important role in the construction.
To show the closed loop $\hinf$ bound in \Cref{thm:mhinf}, we need to make an additional assumption on the resulting control system, which envolves the existence of solutions to specific Lur'e equations.
In order to overcome this additional assumption, in \Cref{thm:mhinf-v2} we present an extension of the method.
Subsequently, using the algebraic Riccati equations that play a central role in our approach, we are able to develop a model order reduction method which preserves \portHamiltonian structure.
For that, ideas from system balancing and the effort-constrained method from~\cite{PolV12} are used.
Let us point out that other structure-preserving model reduction methods have been proposed.
These include approaches based on Riccati equations~\cite{BreMS21}, spectral factorization~\cite{BreU21}, tangential interpolation~\cite{GugPBV12}, symplectic geometry~\cite{MamZ22}, Krylov methods~\cite{PolV09,PolV10}, and optimization~\cite{SchV20-2}.

The paper is structured as follows.
In \Cref{sec:preliminaries}, we collect the necessary background, introduce the notion of \portHamiltonian systems, and mention connections to Kál\-mán–Ya\-ku\-bo\-vich–Pop\-ov linear matrix inequalities (KYP LMIs).
Further, we shortly discuss the notion of (strictly) positive real systems and recall a definition from~\cite{Wen88} relying on Lur'e equations to circumvent inconsistencies found in the literature.
At the end of the section, we adapt a known statement regarding the power-conserving interconnection of such systems to this setting.
Our main results are stated in \Cref{sec:mainresult}.
After a precise problem formulation and a motivating example, \Cref{thm:mhinf,thm:mhinf-v2} are developed.
Both theorems state a pair of modified algebraic Riccati equations that allow for the construction of a controller that has a \portHamiltonian formulation and ensures an $\hinf$ bound for the closed loop transfer function.
As a consequence of these results, in \Cref{sec:modelreduction} a balancing based model order reduction method is presented.
In \Cref{sec:numexp}, the theoretical results are illustrated using numerical examples.
Finally, \Cref{sec:conclusion} concludes the findings and outlines possible future research objectives.

\subsubsection*{Notation}
We denote the open right and the open left half-plane by $\CC^+$ and $\CC^-$, respectively, and the imaginary axis by $i\RR$.
We denote the real part of a complex number $z \in \CC$ by $\Re(z)$.
The identity matrix with size inferred from the context is denoted by $I$.
Besides, the conjugate transpose of a matrix $A$ is denoted with $A\hm$, its spectrum by $\spectrum(A)$, and its kernel by $\ker(A)$.
Further, the smallest singular value in the economy sized singular value decomposition is termed $\sigma_{\min}(A)$.
In other words, if the matrix $A$ has full rank, then $\sigma_{\min}(A)$ is positive.
For a symmetric matrix $A\in\CC^{n,n}$, the smallest eigenvalue is denoted by $\lambda_{\min}(A)$.
We write $A \succeq 0$ if $x\hm A x \geq 0$ for all $x\in \CC^{n}$, and $A \succ 0$ if $x\hm A x > 0$ for all $x\in\CC^{n}\setminus\{0\}$.
Similarly, for matrices $A$ and $B$ we write $A \succeq B$ if $A-B \succeq 0$.
We call a matrix $A \in \CC^{n,n}$ \emph{stable} if $\spectrum(A) \subseteq \CC^{-} \cup i\RR$ and all purely imaginary eigenvalues of $A$ are semi-simple, and \emph{asymptotically stable} if $\spectrum(A)\subseteq \CC^-$.

\section{Preliminaries}\label{sec:preliminaries}

In this paper, we consider linear \portHamiltonian systems without direct feedthrough, which are special cases of linear time-invariant systems.
General linear time-invariant systems take the form
\begin{align*}
    \dot x & = Ax + Bu,\\
    y & = C x + D u,
\end{align*}
where $A\in\RR^{n,n}, B\in \RR^{n,m}, C \in \RR^{p,n}$, and $D \in \RR^{p,m}$.
We will abbreviate the system as $(A,B,C,D)$ and write $(A,B,C)$ if $D=0$.
Throughout this paper, we assume $m,p \leq n$ and call these systems \emph{square} if $m=p$.
A linear system $(A,B,C,D)$ is termed \emph{minimal} if the pair $(A,B)$ is controllable and the pair $(A,C)$ is observable.

Linear \portHamiltonian (\pH) systems are systems of the form
\begin{align*}
    \dot x & = (J-R)Q x + (B-P) u,\\
    y & = (B+P)\tp Q x  + D u,
\end{align*}
where $B\in\RR^{n,m}$ and $J,R,Q\in\RR^{n,n}$ are such that
\begin{itemize}
   \item $J$ is skew-symmetric,
   \item $Q$ is symmetric positive definite, and
   \item $D,P$ and $R=R\tp$ satisfy
   \begin{equation*}
       \begin{bmatrix} R & P \\ P\tp & \tfrac12 (D+D\tp) \end{bmatrix} \succeq 0.
   \end{equation*}
\end{itemize}
We will be interested in the case $D = 0$ and $P=0$.
In this case we will write $(J,R,Q,B)$ to characterize the system and use the abbreviations $A:=(J-R)Q$ and $C:=B\tp Q$.
The function $\mathcal{H}: x \mapsto \tfrac12 x\tp Q x$ is usually called the \emph{Hamiltonian} of $(J,R,Q,B)$.
We will also call $Q$ the Hamiltonian.
Note that by our definition, all \pH systems are square.

To prove our main results, we make use of a preliminary result (\Cref{thm:interconnection-posreal}) which is concerned with the asymptotic stability of the closed loop system matrix of a system resulting from power-conserving interconnection of two systems which satisfy Lur'e equations related to the notions of (strict) positive realness.
Although the definitions of positive real systems are consistent in the literature, for strict positive realness there exist multiple definitions.
For example,~\cite{Hak20} distinguishes between ``weak strict positive realness'' and ``strict positive realness'', whereas in~\cite{And68} this distinction is not made.
An overview of different definitions and their connections is given in~\cite{Wen88}.
There it was argued that, due to their importance for stability analysis, the Lur'e equations should be used for the definition of strict positive realness.
We follow this argumentation, but emphasize the distinction from the existing definitions by avoiding the term ``positive real''.
Solutions of Lur'e equations were studied in, e.g.,~\cite{Rei11}.

We say that a square linear system $(A,B,C,D)$ satisfies
\begin{itemize}
    \item \emph{the Lur'e equations}, if $A$ is stable and there exists a symmetric positive definite matrix $P\in\RR^{n,n}$ and matrices $L \in \RR^{n,m}, W \in \RR^{m,m}$ that satisfy
    \begin{equation}\label{eq:lure-pr-D}
        \begin{aligned}
            A\tp P + P A & = - L L\tp \\
            C\tp - PB & = L W \\
            W\tp W & = D + D\tp.
        \end{aligned}
    \end{equation}
    \item \emph{the strong Lur'e equations}, if $A$ is asymptotically stable, $B$ has full column rank and there exist symmetric positive definite matrices $P,S\in\RR^{n,n}$ and matrices $L \in \RR^{n,m}, W \in \RR^{m,m}$ that satisfy
    \begin{equation}\label{eq:lure-spr-D}
        \begin{aligned}
            A\tp P + P A & = - L L\tp - S \\
            C\tp - PB & = L W \\
            W\tp W & = D + D\tp.
        \end{aligned}
    \end{equation}
\end{itemize}

Let us briefly point out a few differences to the standard notions of positive realness.
The usual definition, e.g.,~\cite{Wil72,BoyEFB94,And67} of positive real systems is in terms of the transfer function $G(s) = C(sI - A)\inv B + D$ of $(A,B,C,D)$, which has to be analytic in $\CC^+$ and satisfy
\begin{equation*}
    G(s) + G(s)\hm \succeq 0 ~\text{ for all }~  s \in \CC^+\cup i\RR.
\end{equation*}
It can then be shown that, under the assumption of minimality of $(A,B,C,D)$, positive realness is equivalent to the system satisfying~\eqref{lure-pr-D}~\cite{Wil71,BoyEFB94}.
For strict positive realness, there are multiple slightly different definitions in the literature.
For example, in~\cite{GuiO13} a transfer function is termed strictly positive real if it is analytic in $\CC^+$ and satisfies
\begin{equation*}
    G(s) + G(s)\hm \succeq \eta I ~\text{ for all }~  s \in \CC^+
\end{equation*}
for some $\eta > 0$.
On the other hand, in~\cite{And68} a transfer function $G$ that is termed strictly positive real satisfies
\begin{equation}\label{eq:spr-def-1-31}
    G(i\omega) + G(i\omega)\hm \succ 0 ~\text{ for all }~ \omega \in \RR.
\end{equation}
There, it is argued that for a strictly positive real transfer function $G$ and a sufficiently small $\varepsilon>0$ also $G(\cdot - \varepsilon)$ is positive real.
This may contradict the claims made in~\cite[Examples 3 and 4]{Hak20}, where~\eqref{spr-def-1-31} is the definition of ``weak strict positive realness''.

\begin{remark}\label{rem:kyp}
    We may reformulate~\eqref{lure-pr-D} as the matrix inequality
    \begin{equation}\label{eq:kyp-lmi-D}
        \begin{bmatrix} -PA - A\tp P & C\tp - PB \\ C - B\tp P & D + D\tp \end{bmatrix} = \begin{bmatrix} LL\tp & L W \\ W\tp L\tp & W\tp W \end{bmatrix} = \begin{bmatrix} L \\ W\tp \end{bmatrix} \begin{bmatrix} L\tp & W \end{bmatrix} \succeq 0, ~~ P = P\tp \succ 0.
    \end{equation}
    If $D=0$, then $W = 0$ and the above inequality simplifies to
    \begin{equation*}
        \begin{bmatrix} -PA - A\tp P & C\tp - PB \\ C - B\tp P & 0 \end{bmatrix} \succeq 0,
    \end{equation*}
    which is equivalent to
    \begin{equation*}
        PA + A\tp P \preceq 0 \und C = B\tp P.
    \end{equation*}

    We can proceed similarly for~\eqref{lure-spr-D} and observe that for a linear system $(A,B,C)$ where $A$ is asymptotically stable and $B$ has full column rank a sufficient condition for the satisfaction of the strong Lur'e equations is the existence of a symmetric positive definite matrix $P\in\RR^{n,n}$ such that
    \begin{equation*}
        P A + A\tp P \prec 0 \und C = B\tp P.
    \end{equation*}

    The inequality~\eqref{kyp-lmi-D} is linear in $P$ and is commonly known as \emph{Kálmán-Ya\-ku\-bo\-vich-Popov linear matrix inequality} (KYP-LMI).
    It was shown in~\cite{Wil71} that there exist extremal solutions $X_{\min}$ and $X_{\max}$ to~\eqref{kyp-lmi-D} in the sense that $0\preceq X_{\min} \preceq P \preceq X_{\max}$ for all solutions $P$ of~\eqref{kyp-lmi-D}.
    Similarly, there exist extremal solutions $Y_{\min}$ and $Y_{\max}$ to a dual KYP-LMI.
    Further, provided that $D+D\tp$ is nonsingular, the inequality~\eqref{kyp-lmi-D} can be associated with an algebraic Riccati equation using the Schur complement.
    The solutions $X_{\min}$ and $Y_{\min}$ are used for \emph{positive real balancing}~\cite{Obe91}.
\end{remark}

For the sake of completeness, let us recall some well-known observations concerning \pH systems, which are also stated in, e.g.,~\cite{BeaMV19}.
Suppose $(J,R,Q,B)$ is a \pH system and~$A$ and~$C$ are defined as usual.
Then $AQ\inv = J-R$, which shows that $J$ and $-R$ are the respective skew-symmetric and symmetric parts of $AQ\inv$.
Hence, we have
\begin{equation}\label{eq:JR-lure}
    J = \tfrac12 (AQ\inv - Q\inv A\tp) \und R = - \tfrac12 (AQ\inv + Q\inv A\tp).
\end{equation}
Since $R$ is symmetric positive semi-definite by assumption, also
\begin{equation*}
    0 \preceq -2 QRQ = - QA - A\tp Q.
\end{equation*}
As (by our definition) all \portHamiltonian systems are stable~\cite[Lemma 3.1]{MehMS16}, this shows that $(J,R,Q,B)$ satisfies the Lur'e equations.

With \Cref{rem:kyp} in mind, let us assume that for a general linear system $(A,B,C)$ there exists a symmetric positive definite matrix $P$ such that
\begin{equation*}
    PA + A\tp P \preceq 0 \und C = B\tp P.
\end{equation*}
Then by Sylvester’s law of inertia for the symmetric part of $AP\inv$ we have
\begin{equation*}
    P\inv (PA + A\tp P) P\inv = AP\inv + P\inv A\tp \preceq 0.
\end{equation*}
Since $C = B\tp P$ and $P$ is symmetric positive definite, we can choose $Q := P$ and define~$J$ and~$R$ as in~\eqref{JR-lure} to arrive at a \portHamiltonian formulation $(J,R,Q,B)$ of $(A,B,C)$.
We will use this construction in the proofs of \Cref{thm:mhinf,thm:mhinf-v2}.
Note that $R$ is definite and the system satisfies the strong Lur'e equations if $PA + A\tp P \prec 0$ and $B$ has full column rank.
For that, recall \Cref{rem:kyp} and see~\cite[Lemma 3.1]{MehMS16} for the asymptotic stability of~$A$.

We have seen that any positive definite solution $X$ to
\begin{equation}\label{eq:switchQ}
    \begin{bmatrix} -A\tp X - XA & C\tp - XB \\ C - B\tp X & 0 \end{bmatrix} \succeq 0, ~~ X = X\tp \succeq 0
\end{equation}
defines a \portHamiltonian representation of $(A,B,C)$ with $X$ as the Hamiltonian.
Since multiple solutions to~\eqref{switchQ} may exist, this shows that \pH formulations are not unique.
This fact is, for example, exploited in~\cite{BreMS21}, where the authors determine a \pH representation which leads to the minimization of an a priori error bound for model reduction.
Another example of this principle is~\cite{BeaMV19}, where maximally robust \pH representations are studied.
We will also make use of the non-uniqueness in \Cref{sec:numexp}.

Since there is a relationship between the Lur'e equations~\eqref{lure-pr-D} and~\eqref{lure-spr-D} and the possibility of a \portHamiltonian realization, we can ask how these properties influence a closed loop system resulting from power-conserving interconnection of two such systems.
\Cref{thm:interconnection-posreal} answers this question and appeared in similar form in~\cite{BenIJ81} and~\cite{HadB91}.

\begin{proposition}\label{thm:interconnection-posreal}
    Suppose $\Sigma_1 = (A_1,B_1,C_1)$ and $\Sigma_2 = (A_2,B_2,C_2)$ are systems that satisfy the Lur'e equations and can be coupled via power-conserving interconnection.
    If $\Sigma_1$ is minimal and $\Sigma_2$ satisfies the strong Lur'e equations, then
    \begin{equation*}
        \mathbf{A} := \begin{bmatrix} A_1 & - B_1 C_2 \\ B_2 C_1 & A_2 \end{bmatrix}
    \end{equation*}
    is asymptotically stable.
\end{proposition}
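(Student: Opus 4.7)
The plan is to exhibit a Lyapunov function for the closed loop by summing the storage functions associated with the Lur'e equations for $\Sigma_1$ and $\Sigma_2$. Let $P_1, L_1$ satisfy~\eqref{lure-pr-D} for $\Sigma_1$ and $P_2, L_2, S_2$ satisfy~\eqref{lure-spr-D} for $\Sigma_2$; since both systems are square with zero feedthrough, the equations reduce to $P_i A_i + A_i\tp P_i = -L_i L_i\tp\,(-\,S_2\text{ for }i=2)$ and $C_i = B_i\tp P_i$. I would set
\begin{equation*}
    V(x_1,x_2) := x_1\tp P_1 x_1 + x_2\tp P_2 x_2,
\end{equation*}
which is positive definite since $P_1, P_2 \succ 0$. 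The power-conserving interconnection that yields $\mathbf{A}$ corresponds to $u_1 = -y_2$, $u_2 = y_1$, so the closed loop is $\dot{x}_1 = A_1 x_1 - B_1 C_2 x_2$ and $\dot{x}_2 = B_2 C_1 x_1 + A_2 x_2$.

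Differentiating $V$ along trajectories and substituting $P_1 B_1 = C_1\tp$ and $P_2 B_2 = C_2\tp$, the cross terms $-2 x_1\tp P_1 B_1 C_2 x_2$ and $2 x_2\tp P_2 B_2 C_1 x_1$ become $-2 (C_1 x_1)\tp (C_2 x_2) + 2 (C_2 x_2)\tp (C_1 x_1) = 0$. This is precisely the power balance enforced by the interconnection, and it leaves
\begin{equation*}
    \ddt V = -\,x_1\tp L_1 L_1\tp x_1 \;-\; x_2\tp L_2 L_2\tp x_2 \;-\; x_2\tp S_2 x_2 \;\leq\; 0,
\end{equation*}
so $V$ is a bona fide Lyapunov function and $\mathbf{A}$ is at least stable.

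To upgrade stability to asymptotic stability, I would invoke LaSalle's invariance principle and show that the largest invariant set contained in $\{\ddt V = 0\}$ is $\{0\}$. The main obstacle here is that the dissipation rate is only strict in the $x_2$-direction (through $S_2$), so vanishing of $\ddt V$ only forces $x_2 = 0$ together with $L_1\tp x_1 = 0$, not $x_1 = 0$ directly. On an invariant set with $x_2 \equiv 0$, the second equation of the closed loop gives $B_2 C_1 x_1 \equiv 0$, and since $\Sigma_2$ satisfies the strong Lur'e equations $B_2$ has full column rank, hence $C_1 x_1 \equiv 0$. The first equation then reduces to $\dot{x}_1 = A_1 x_1$, so $C_1 e^{A_1 t} x_1(0) = 0$ for all $t \geq 0$; minimality of $\Sigma_1$ (in particular observability of $(A_1, C_1)$) forces $x_1(0) = 0$. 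Thus LaSalle yields asymptotic stability of the origin, which is equivalent to $\spectrum(\mathbf{A}) \subseteq \CC^-$.
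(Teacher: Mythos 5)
Your proof is correct and is in substance the same as the paper's: both rest on the block-diagonal certificate $\mathbf{P} = \diag(P_1,P_2)$ built from the two Lur'e solutions, use $C_i = B_i\tp P_i$ to cancel the interconnection cross terms, use the strict dissipation term $S_2$ to force $x_2 = 0$, and then combine the full column rank of $B_2$ with observability of $(A_1,C_1)$ to rule out a nontrivial marginal mode. The only cosmetic difference is that the paper carries out the final step algebraically (an eigenvector of $\mathbf{A}$ with $\Re(\lambda)=0$ plus the Hautus test) while you phrase it dynamically via LaSalle's invariance principle; for a linear system these are equivalent.
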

\begin{proof}
    Since $\Sigma_1$ and $\Sigma_2$ satisfy the (strong) Lur'e equations, there exist symmetric positive definite matrices $P_1$ and $P_2$, matrices $L_1$ and $L_2$, and a symmetric positive definite matrix $S_2$ of appropriate dimension such that
    \begin{align*}
        A_1\tp P_1 + P_1 A_1 + L_1 L_1\tp & = 0, \\
        B_1\tp P_1 - C_1 & = 0, \\
        A_2\tp P_2 + P_2 A_2 + L_2 L_2\tp + S_2 & = 0, \\
        B_2\tp P_2 - C_2 & = 0.
    \end{align*}
    To prove the asymptotic stability of $\mathbf{A}$, define $\mathbf{P}$ and $\mathbf{R}$ as
    \begin{equation*}
        \mathbf{P} := \begin{bmatrix} P_1 & 0 \\ 0 & P_2 \end{bmatrix}
        \und
        \mathbf{R} := \begin{bmatrix} R_1 & 0 \\ 0 & R_2 \end{bmatrix} := \begin{bmatrix} L_1 L_1\tp & 0 \\ 0 & L_2 L_2\tp + S_2 \end{bmatrix}.
    \end{equation*}
    and notice that they satisfy the Lyapunov equation
    \begin{equation}\label{eq:lyapunovAPR}
        \mathbf{A}\tp \mathbf{P} + \mathbf{P} \mathbf{A} + \mathbf{R} = 0.
    \end{equation}
    Now, assume $\mathbf{v}$ is a right eigenvector of $\mathbf{A}$ associated with the eigenvalue $\lambda \in \CC$.
    Multiplying~\eqref{lyapunovAPR} by $\mathbf{v}\hm$ from the left and $\mathbf{v}$ from the right, we obtain
    \begin{equation*}
        2 \Re(\lambda) \mathbf{v}\hm \mathbf{P} \mathbf{v} = - \mathbf{v}\hm \mathbf{R} \mathbf{v}.
    \end{equation*}
    Since $\mathbf{P} \succ 0$ and $\mathbf{R} \succeq 0$, we immediately see that $\Re(\lambda) \leq 0$.
    Now assume $\Re(\lambda) = 0$.
    Then also $\mathbf{R} \mathbf{v} = 0$.
    Partitioning $\mathbf{v} = \big[\begin{smallmatrix} v_1 \\ v_2\end{smallmatrix}\big]$ appropriately and noticing $\ker(R_2)=\{0\}$, we see that $v_2 = 0$.
    Rewriting $\mathbf{A} \mathbf{v} = \lambda \mathbf{v}$ as
    \begin{equation}\label{eq:blockEV}
        \begin{bmatrix} A_1 & - B_1 C_2 \\ B_2 C_1 & A_2 \end{bmatrix} \begin{bmatrix} v_1 \\ 0 \end{bmatrix} = \lambda \begin{bmatrix} v_1 \\ 0 \end{bmatrix},
    \end{equation}
    we notice that the second row reads
    \begin{equation}\label{eq:BCv}
        B_2 C_1 v_1 = 0.
    \end{equation}
    By assumption, the system $(A_2,B_2,C_2)$ satisfies the strong Lur'e equations.
    Hence, $B_2$ has full column rank and the matrix $B_2\tp B_2$ is invertible.
    Multiplying~\eqref{BCv} by $B_2\tp$ from the left, we obtain $C_1 v_1 = 0$.
    But the first row of~\eqref{blockEV} gives $A_1 v_1 = \lambda v_1$, which together with $C_1 v_1 = 0$ contradicts the assumed observability of $(A_1,C_1)$ using the Hautus test.
    This shows that $\Re(\lambda) = 0$ is not possible, so $\lambda \in \CC^-$ and $\mathbf{A}$ is asymptotically stable.
\end{proof}

\section{$\hinf$ Control and Structure Preserving Modifications} \label{sec:mainresult}
Let us state the $\hinf$ control problem that we will consider.
Assume a linear system under the influence of an additional control $w$ in the form of
\begin{align*}
    \dot{x} & = Ax + Bu + D_1 w, \\*
    y & = Cx + D_2 w
\end{align*}
is given, where $A \in \RR^{n,n}$, $B \in \RR^{n,m}, C \in \RR^{m,n}$ and $w$ has length $\ell \geq n+m$.
In the literature, $w$ is often assumed to be a Gaussian white noise process.
Here, we assume that $w$ is an additional deterministic control input and refer to, e.g.,~\cite{DulP13,Ste94} for the stochastic background.
We require the matrices $D_1\in\RR^{n,\ell}$ and $D_2\in\RR^{m,\ell}$ to satisfy $D_1 D_2\tp = 0$.
Consider the observed variable $z$ as
\begin{equation*}
    z = E_1 x + E_2 u,
\end{equation*}
where $E_1 \in \RR^{\ell,n}$ and $E_2\in\RR^{\ell,m}$ are such that $E_1\tp E_2 = 0$.

Now assume that the linear system
\begin{align*}
    \dot{\hat{x}} & = \hat{A} \hat{x} + \hat{B} \hat{u}, \\*
    \hat{y} & = \hat{C} \hat{x}
\end{align*}
is connected to the former system via power-conserving interconnection, i.e.~$\hat{u} = y$ and $u = - \hat{y}$.
Then the dynamics of the closed loop system are determined by
\begin{align*}
    \begin{bmatrix} \dot{x} \\ \dot{\hat x} \end{bmatrix}
    & =
    \begin{bmatrix} A & - B \hat C \\ \hat B C & \hat A \end{bmatrix}
    \begin{bmatrix} x \\ \hat x \end{bmatrix}
    +
    \begin{bmatrix} D_1 w\\ \hat B D_2 w \end{bmatrix}
\end{align*}
and $z$ takes the form
\begin{equation*}
    z = \begin{bmatrix} E_1 & - E_2 \hat C \end{bmatrix} \begin{bmatrix} x \\ \hat x \end{bmatrix}.
\end{equation*}
Note that the closed loop system matrix has the same form as $\mathbf{A}$ in \Cref{thm:interconnection-posreal}, which is why we denote
\begin{equation}\label{eq:hinf-ADE}
    \mathbf{A}:=\begin{bmatrix} A & - B \hat C \\ \hat B C & \hat A \end{bmatrix}, \quad
    \mathbf{D}:=\begin{bmatrix} D_1 \\ \hat B D_2 \end{bmatrix}, ~~ \text{and} ~~
    \mathbf{E}:=\begin{bmatrix} E_1 & - E_2 \hat C \end{bmatrix}.
\end{equation}
The closed loop transfer function $T_{zw}=T_{z\leftarrow w}$ from $w$ to $z$ is then given by
\begin{equation}\label{eq:Tzw-def}
    T_{zw} = \mathbf{E}(sI-\mathbf{A})\inv \mathbf{D}.
\end{equation}
The goal of $\hinf$ control design is to choose the controller $(\hat A, \hat B, \hat C)$ so that the closed loop system matrix $\mathbf{A}$ is asymptotically stable and the transfer function $T_{zw}$ satisfies
\begin{equation}\label{eq:Tzw-gamma}
    \norm{T_{zw}}_{\mathcal{H}_\infty} = \sup_{\omega \in \RR} \norm{T_{zw}(i\omega)}_2 < \gamma
\end{equation}
for some $\gamma \in (0,\infty)$.
Such a controller is termed \emph{admissible}.
Typically, the parameter $\gamma$ is not required to be optimal, i.e.~a smaller bound~$\tilde{\gamma}$ and a corresponding admissible controller might exist.
We denote the smallest value of $\gamma$ for which an admissible controller exists as~$\gamma_0$, only consider the case $\gamma > \gamma_0$, and call the corresponding controllers \emph{suboptimal}.
Suboptimal $\hinf$ controllers were extensively studied in~\cite{DoyGKF89}.

Unfortunately, even if the original system is \portHamiltonian, this is not necessarily also the case for the $\hinf$ controller stated in~\cite{DoyGKF89}.
This is demonstrated by \Cref{exmp:cls-hinf-not-pH}.

\begin{remark}
    \label{rem:cls-hinf}
    The $\hinf$ controller stated in~\cite{DoyGKF89} (which we will refer to as the \emph{classical $\hinf$ controller} in the following) is, in the simplest setting, constructed as follows.
    Suppose the stabilizing solutions $X$ and $Y$ of the algebraic Riccati equations
    \begin{equation}\label{eq:hinf-cls-c}
        A\tp X + X A - (1-\gamma^{-2}) X B B\tp X + C\tp C = 0
    \end{equation}
    and
    \begin{equation}\label{eq:hinf-cls-f}
        A Y + Y A\tp - (1-\gamma^{-2}) Y C\tp C Y + B B\tp = 0
    \end{equation}
    exist and that $\rho(XY) < \gamma^2$.
    Define $Z:=(I-\gamma^{-2}YX)\inv$ and
    \begin{equation*}
        \hat{A} := A - (1-\gamma^{-2}) Y C\tp C - BB\tp X Z, ~ \hat{B} := Y C\tp, ~ \hat{C} := B\tp X Z.
    \end{equation*}
    Then $(\hat{A},\hat{B},\hat{C})$ yields $\norm{T_{zw}}_\hinf < \gamma$.

    With our problem formulation, these AREs follow from the results of~\cite{DoyGKF89} when the matrices $D_1,D_2,E_1$ and $E_2$ are chosen as
    \begin{equation*}
        D_1 = \begin{bmatrix} B & 0 \end{bmatrix},~~ D_2 = \begin{bmatrix} 0 & I \end{bmatrix},~~ E_1 = \begin{bmatrix} C \\ 0 \end{bmatrix} \und E_2 = \begin{bmatrix} 0 \\ I \end{bmatrix}.
    \end{equation*}
\end{remark}

\begin{example}\label{exmp:cls-hinf-not-pH}
    Let us consider the \portHamiltonian system $(J,R,Q,B)$ defined by
    \begin{equation*}
    J =
    \begin{bmatrix}
         0 &  1 \\
        -1 &  0
    \end{bmatrix},~~
    R = Q =
    \begin{bmatrix}
        1 & 0 \\
        0 & 1
    \end{bmatrix}, \und
    B =
    \begin{bmatrix}
        2 & 0 \\
        0 & 1
    \end{bmatrix}.
    \end{equation*}
    As usual, let us define $A := (J-R)Q$ and $C:= B\tp Q$ and assume that the classical $\hinf$ controller $(\hat{A},\hat{B},\hat{C})$ is constructed as above for $\gamma = 2$.
    Since $(A,B,C)$ is \portHamiltonian, we can rewrite the matrix $\hat{B}$ as
    \begin{equation*}
        \hat{B} = Y C\tp = Y Q B.
    \end{equation*}
    If $(\hat{A},\hat{B},\hat{C})$ was \portHamiltonian, then there would exist a symmetric positive definite matrix $S$ such that $\hat{C} = \hat{B}\tp S$.
    Inserting the above equation and the definition of $\hat{C}$, we see that $S$ has to satisfy
    \begin{equation*}
        \hat{B}\tp S = B\tp Q Y S = B\tp X Z = \hat{C},
    \end{equation*}
    which can be written as a system of linear equations $P S = F$.
    For $J,R,Q$ and $B$ defined as above, the matrices $P$ and $F$ are given by
    \begin{equation*}
        P \approx
        \begin{bmatrix}
        1.6940  & -0.1497 \\
       -0.0749  &  0.4800
        \end{bmatrix}
        \und
        F \approx
        \begin{bmatrix}
        2.0592  &  0.1736 \\
        0.0868  &  0.5093
        \end{bmatrix},
    \end{equation*}
    where we only show the first five relevant digits.
    In particular, the matrix $P$ is invertible and the unique solution $S$ is given by
    \begin{equation*}
        S \approx
        \begin{bmatrix}
        1.2488  &  0.1990 \\
        0.3756  &  1.0919
        \end{bmatrix},
    \end{equation*}
    which is clearly not symmetric.
    Hence $(\hat{A},\hat{B},\hat{C})$ can not be represented as a \portHamiltonian system.
\end{example}

Similar to the LQG case discussed in~\cite{BreMS21}, our approach is to alter the algebraic Riccati equations~\eqref{hinf-cls-c} and~\eqref{hinf-cls-f} to ensure that the constructed controller has \portHamiltonian structure.
To show that the modiﬁed Riccati equations indeed yield a controller that ensures the closed loop error bound \eqref{Tzw-gamma}, we will follow the general idea of~\cite{HadBW93} and use multiple results of~\cite{BerH89}.
In contrast to our problem formulation, \cite{BerH89} considers the case where the original system and the control system are not connected by power-conserving interconnection but rather interconnected in the form $\hat{u} = y$ and $u=\hat{y}$.
Here, we have adapted the results of~\cite{BerH89} to fit our setting.
Further, for simplicity we define
\begin{equation*}
    V_1 := D_1 D_1\tp, ~~ V_2 := D_2 D_2\tp, ~~ R_1 := E_1\tp E_1, \und R_2:= E_2\tp E_2
\end{equation*}
and assume that both $V_2$ and $R_2$ are positive definite.
Later, by an appropriate choice for these matrices, we will be able to use \Cref{thm:bernstein-5.6} to show our main results.
In this context, the key observation from \Cref{thm:bernstein-5.6} is the guaranteed bound $\norm{T_{zw}}_{\hinf} < \gamma$.

\begin{proposition}[{\cite[Proposition 5.6]{BerH89}}]\label{thm:bernstein-5.6}
    Suppose that $(A,B,C)$ is a linear system, $(A,B)$ is stabilizable, $(A,C)$ is detectable, $\gamma>0$ and that there exist solutions $X=X\tp \succ 0$ and $Y=Y\tp \succeq 0$ of the AREs
    \begin{equation*}
        A Y + Y A\tp + V_1 + \gamma^{-2} Y R_1 Y - Y C\tp V_2^{-1} C Y = 0 \\
    \end{equation*}
    and
    \begin{align*}
        (A+\gamma^{-2}YR_1)\tp X + X(A+\gamma^{-2}YR_1) + R_1 & \\ - XBR_2^{-1}B\tp X + \gamma^{-2} X Y C^T V_2^{-1} C Y X & = 0.
    \end{align*}
    Further, assume that
    \begin{equation*}
        A+\gamma^{-2} Y R_1 + (\gamma^{-2}YC\tp V_2\inv C Y - B R_2\inv B\tp)X
    \end{equation*}
    is asymptotically stable and that
    \begin{equation*}
        \Big(A+\gamma^{-2}Y R_1 + X\inv R_1,~ \gamma^{-1} [R_1 + X B R_2\inv B\tp X]^{1/2}\Big)
    \end{equation*}
    is observable.
    Define a control system $(\hat{A},\hat{B},\hat{C})$ via
    \begin{equation*}
        \hat{A} := A - Y C\tp V_2\inv C - B R_2\inv B\tp X + \gamma^{-2} Y R_1,~~ \hat{B} := Y C\tp V_2\inv,~~
        \hat{C} := R_2\inv B\tp X.
    \end{equation*}
    If $\mathbf{A}$ as in~\eqref{hinf-ADE} is asymptotically stable, then the closed loop transfer function $T_{zw}$ satisfies $\norm{T_{zw}}_{\mathcal{H}_\infty} < \gamma$.
\end{proposition}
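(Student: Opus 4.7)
The plan is to invoke a bounded real lemma: given that $\mathbf{A}$ is asymptotically stable, if one can exhibit some $\mathbf{Q}=\mathbf{Q}\tp \succeq 0$ satisfying
\begin{equation*}
    \mathbf{A}\tp\mathbf{Q} + \mathbf{Q}\mathbf{A} + \gamma^{-2}\mathbf{Q}\mathbf{D}\mathbf{D}\tp\mathbf{Q} + \mathbf{E}\tp\mathbf{E} = 0,
\end{equation*}
then $\norm{T_{zw}}_{\mathcal{H}_\infty} \leq \gamma$, and the observability side condition listed in the proposition will upgrade this to strict inequality. The core task is therefore to assemble an appropriate $\mathbf{Q}$ from the two hypothesised ARE solutions $X$ and $Y$.

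I would first pass to error coordinates via the congruence $(x,\hat{x}) \mapsto (x,\,x-\hat{x})$. The identities $D_1 D_2\tp = 0$ and $E_1\tp E_2 = 0$ then render $\mathbf{D}\mathbf{D}\tp$ and $\mathbf{E}\tp\mathbf{E}$ block-diagonal, and by the design of $(\hat{A},\hat{B},\hat{C})$ the transformed closed-loop matrix becomes partially triangular. In these coordinates the natural ansatz is a block-diagonal $\mathbf{Q} = \diag(X,\widehat{P})$, where $\widehat{P}$ is determined by $Y$ (in the invertible case simply $\widehat{P}=Y\inv$, with a pseudoinverse-based variant on $\ran(Y)$ otherwise). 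The $(1,1)$- and $(2,2)$-blocks of the Lyapunov identity then collapse to the two hypothesised AREs, the $X$-ARE appearing in the plant block and the $Y$-ARE in the error block, essentially by construction of the controller.

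The main obstacle will be the off-diagonal block of the expansion. In contrast to the LQG setting, the $\hinf$ problem produces genuine cross contributions from $\gamma^{-2}\mathbf{Q}\mathbf{D}\mathbf{D}\tp\mathbf{Q}$ that must be cancelled. This is precisely the purpose of the correction $\gamma^{-2}YR_1$ that has been grafted onto $\hat{A}$ in the controller definition, and it is tightly linked to the stability hypothesis that $A + \gamma^{-2}YR_1 + (\gamma^{-2}YC\tp V_2\inv CY - BR_2\inv B\tp)X$ be asymptotically stable, which one uses to close the loop in the error block. Once the three blocks are verified, asymptotic stability of $\mathbf{A}$ together with $\mathbf{Q}\succeq 0$ immediately yields $\norm{T_{zw}}_{\mathcal{H}_\infty} \leq \gamma$ via a standard Parseval-plus-dissipation estimate. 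The strict bound is then extracted by a Hautus-type argument: equality on the imaginary axis would force a nontrivial mode to lie in the kernel of the observability matrix associated with the pair $\bigl(A+\gamma^{-2}YR_1 + X\inv R_1,\, \gamma^{-1}[R_1+XBR_2\inv B\tp X]^{1/2}\bigr)$, contradicting the observability hypothesis in the statement.
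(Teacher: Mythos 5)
First, a point of reference: the paper does not prove this proposition at all --- it is imported from \cite{BerH89} (with the signs adapted to the power-conserving interconnection), so there is no in-paper argument to compare against. Your overall strategy, namely to exhibit a positive semi-definite solution of a closed-loop bounded-real Riccati equation assembled from $X$ and $Y$ and then invoke the (strict) bounded real lemma, is indeed the route taken in \cite{BerH89}, so the high-level plan is sound.

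The execution, however, has concrete gaps. (1) The claim that the congruence $(x,\hat x)\mapsto(x,x-\hat x)$ renders $\mathbf{D}\mathbf{D}\tp$ and $\mathbf{E}\tp\mathbf{E}$ block-diagonal is false: the transformed input matrix is $\bigl[\begin{smallmatrix} D_1 \\ D_1-\hat B D_2\end{smallmatrix}\bigr]$, whose off-diagonal Gram block is $D_1(D_1-\hat B D_2)\tp = V_1\neq 0$; the identity $D_1D_2\tp=0$ only removes the $D_1D_2\tp\hat B\tp$ contribution. Likewise $(E_1-E_2\hat C)\tp E_2\hat C=-\hat C\tp R_2\hat C\neq 0$. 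The transformed closed-loop matrix is also not triangular: its $(2,1)$ block is $-\gamma^{-2}YR_1$ and its $(1,2)$ block is $B\hat C$, both nonzero in general. (2) More seriously, the central verification step fails for your ansatz $\mathbf{Q}=\diag(X,\hat P)$. The $(1,1)$ block of $\mathbf{A}\tp\mathbf{Q}+\mathbf{Q}\mathbf{A}+\gamma^{-2}\mathbf{Q}\mathbf{D}\mathbf{D}\tp\mathbf{Q}+\mathbf{E}\tp\mathbf{E}$ in your coordinates evaluates to $A\tp X+XA-XBR_2\inv B\tp X+R_1+\gamma^{-2}XV_1X$, whereas the hypothesised $X$-equation carries $\gamma^{-2}(R_1YX+XYR_1)$ and $\gamma^{-2}XYC\tp V_2\inv CYX$ in place of $\gamma^{-2}XV_1X$; these do not coincide, so the plant block does not ``collapse to the $X$-ARE by construction.'' In \cite{BerH89} the closed-loop Gramian is in fact taken in the controllability form $\mathbf{A}\mathbf{Q}+\mathbf{Q}\mathbf{A}\tp+\gamma^{-2}\mathbf{Q}\mathbf{E}\tp\mathbf{E}\mathbf{Q}+\mathbf{D}\mathbf{D}\tp=0$ with the non-block-diagonal ansatz $\bigl[\begin{smallmatrix} Y+\hat Q & \hat Q\\ \hat Q & \hat Q\end{smallmatrix}\bigr]$, where $\hat Q$ is a further matrix determined by both $X$ and $Y$ --- it is not $Y\inv$ (note also that $Y$ is only assumed $\succeq 0$, so $Y\inv$ need not exist). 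Until the correct closed-loop solution is written down and its blocks are actually verified against the two AREs, the argument is not complete.
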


To state our first result, we make some assumptions regarding the matrices $V_1,V_2,R_1$ and $R_2$.
We assume $V_2 = R_2 = I, ~ R_1 = C\tp C$ and $V_1 = 2R + (1-\gamma^{-2})BB\tp$.
Under these assumptions, the algebraic Riccati equations found in \Cref{thm:bernstein-5.6} become~\eqref{hinf-mod-f} and~\eqref{hinf-mod-c}.
The physical interpretation of the terms $D_1,D_2,E_1$ and $E_2$ is, at least partially, lost.
As we have mentioned earlier, a similar approach was taken in~\cite{HadBW93}.

\begin{theorem}[structure-preserving $\hinf$ control]\label{thm:mhinf}
    Suppose $(J,R,Q,B)$ is a minimal \portHamiltonian system, define $A:=(J-R)Q$ and $C:=B\tp Q$, and assume $\gamma > 1$.
    Let $\hat{Y}=\hat{Y}\tp \succeq 0$ and $\hat{X}=\hat{X}\tp \succeq 0$ be the respective stabilizing solutions of the modified $\hinf$ filter equation
    \begin{equation}\label{eq:hinf-mod-f}
        A \hat{Y} + \hat{Y} A\tp - (1-\gamma^{-2})\hat{Y} C\tp C \hat{Y} + (1-\gamma^{-2})B B\tp + 2R = 0
    \end{equation}
    and the modified $\hinf$ control equation
    \begin{equation}\label{eq:hinf-mod-c}
        (A+\gamma^{-2}\hat{Y}C\tp C)\tp \hat{X} + \hat{X} (A+\gamma^{-2}\hat{Y}C\tp C) - (1-\gamma^{-2})\hat{X} B B\tp \hat{X} + C\tp C = 0.
    \end{equation}
    Define a control system via
    \begin{equation*}
        \hat{A} := A - (1-\gamma^{-2}) \hat{Y} C\tp C - B B\tp \hat{X}, ~~ \hat{B} := \hatY C\tp, ~~ \hat{C} := B\tp \hatX.
    \end{equation*}
    Then $(\hat{A},\hat{B},\hat{C})$ is \portHamiltonian.
    If additionally $(\hat{A},\hat{B},\hat{C})$ satisfies the strong Lur'e equations, then the transfer function $T_{zw}$ of the closed loop system $(\mathbf{A},\mathbf{D},\mathbf{E})$ satisfies
    \begin{equation*}
        \norm{T_{zw}} < \gamma.
    \end{equation*}
\end{theorem}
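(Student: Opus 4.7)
\medskip
\noindent\textbf{Proof plan.}
The argument splits naturally into two parts: first I would establish the \pH structure of the controller $(\hat A,\hat B,\hat C)$, and then, invoking \Cref{thm:bernstein-5.6}, obtain the $\hinf$ bound on $T_{zw}$.

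For the \pH structure, I would follow the construction from the discussion after \Cref{rem:kyp}: it suffices to exhibit a symmetric positive definite matrix $\hat Q$ with $\hat C = \hat B\tp \hat Q$ and $\hat Q\hat A + \hat A\tp \hat Q \preceq 0$, after which \eqref{JR-lure} supplies a \pH realization $(\hat J,\hat R,\hat Q,\hat B)$. Since $\hat B = \hat Y C\tp = \hat Y QB$ and $\hat C = B\tp \hat X$, the output identity $\hat C = \hat B\tp \hat Q$ amounts to $B\tp(\hat X - Q\hat Y \hat Q) = 0$, which under the natural rank condition on $B$ singles out the candidate $\hat Q := \hat Y\inv Q\inv \hat X$. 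I would then prove that this candidate is symmetric and positive definite — a consequence of the coupled structure of the two modified AREs together with $C = B\tp Q$ — and verify the dissipation inequality by substituting the explicit form of $\hat A$ into $\hat Q\hat A + \hat A\tp \hat Q$ and eliminating the quadratic terms via \eqref{hinf-mod-f}, \eqref{hinf-mod-c} and the \pH identity $2R = -AQ\inv - Q\inv A\tp$.

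For the closed loop bound, I would match our setting to that of \Cref{thm:bernstein-5.6} through the choices $V_2 = R_2 = I$, $R_1 = C\tp C$, and $V_1 = 2R + (1-\gamma^{-2}) B B\tp$. Under these choices the two AREs in \Cref{thm:bernstein-5.6} reduce exactly to \eqref{hinf-mod-f}--\eqref{hinf-mod-c}, and the formulas for $\hat A,\hat B,\hat C$ coincide with those of the theorem, so that stabilizability, detectability and the observability requirement of \Cref{thm:bernstein-5.6} all follow from the minimality of $(J,R,Q,B)$ together with $C = B\tp Q$. The only remaining hypothesis is the asymptotic stability of the closed loop matrix $\mathbf{A}$ from \eqref{hinf-ADE}, and this is exactly where the extra assumption comes in: every \pH system satisfies the Lur'e equations, so the minimal plant $(A,B,C)$ does, while the controller is \pH by the first part and by assumption satisfies the strong Lur'e equations. \Cref{thm:interconnection-posreal} then yields asymptotic stability of $\mathbf{A}$, and \Cref{thm:bernstein-5.6} delivers $\norm{T_{zw}}_{\hinf} < \gamma$.

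The main obstacle lies in the first part, specifically in the symmetry of $\hat Q = \hat Y\inv Q\inv \hat X$, equivalent to the commutation-type identity $\hat X\hat Y Q = Q\hat Y\hat X$. This is invisible to either ARE in isolation and should emerge from a cross-multiplication argument combining \eqref{hinf-mod-f} and \eqref{hinf-mod-c} and repeatedly using $C = B\tp Q$; as a sanity check, in the limit $\gamma \to \infty$ the filter ARE admits the solution $\hat Y = Q\inv$, for which the identity reduces to $\hat X = \hat X$. Once symmetry is established, positivity is immediate from the positivity of $\hat X$, $\hat Y$, $Q$, and the dissipation inequality becomes a lengthy but algorithmic cancellation in which precisely the modifications distinguishing our AREs from the classical $\hinf$ AREs \eqref{hinf-cls-c}--\eqref{hinf-cls-f} — namely the added $2R$ in \eqref{hinf-mod-f} and the shift $A \mapsto A + \gamma^{-2}\hat Y C\tp C$ in \eqref{hinf-mod-c} — play the decisive role.
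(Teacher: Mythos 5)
Your overall architecture (establish \pH structure, then feed \Cref{thm:interconnection-posreal} and \Cref{thm:bernstein-5.6}) matches the paper, and the second half of your plan is essentially correct. But the first half has a genuine gap: you never identify the stabilizing solution of \eqref{hinf-mod-f}, and this identification is the linchpin of the entire construction. The modification of the filter equation (replacing $BB\tp$ by $(1-\gamma^{-2})BB\tp + 2R$) is designed precisely so that $\hatY = Q\inv$ solves it \emph{for every} $\gamma$, not merely in the limit $\gamma\to\infty$ as you suggest in your ``sanity check'': plugging in $Q\inv$ and using $AQ\inv = J-R$, $C = B\tp Q$ gives $(J-R)+(-J-R) - (1-\gamma^{-2})BB\tp + (1-\gamma^{-2})BB\tp + 2R = 0$. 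Combined with uniqueness of the stabilizing solution (which follows from stabilizability/detectability of the relevant pairs via Hautus tests and $\gamma>1$), this forces $\hatY = Q\inv$. Once you have that, your candidate $\hat Q = \hatY\inv Q\inv\hatX$ collapses to $\hatX$, $\hat B = \hatY C\tp = B$, the output identity $\hat C = \hat B\tp\hatX$ is immediate, and the ``commutation identity'' $\hatX\hatY Q = Q\hatY\hatX$ that you flag as the main obstacle becomes vacuous. As written, your plan leaves that symmetry to an unspecified ``cross-multiplication argument'' that you have no reason to believe closes; the correct move is not to combine the two AREs but to solve the filter equation explicitly. The dissipation inequality then follows from a direct computation using \eqref{hinf-mod-c} and $\hatY=Q\inv$, yielding $\hat A\tp\hatX + \hatX\hat A = -(C+B\tp\hatX)\tp(C+B\tp\hatX) - \gamma^{-2}\hatX BB\tp\hatX \preceq 0$.

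A second, smaller gap: \Cref{thm:bernstein-5.6} does not only require stabilizability, detectability and an observability condition; it also requires the asymptotic stability of
\begin{equation*}
    A+\gamma^{-2}\hatY C\tp C + (\gamma^{-2}\hatY C\tp C\hatY - BB\tp)\hatX
    = A+\gamma^{-2}\hatY C\tp C - (1-\gamma^{-2})BB\tp\hatX ,
\end{equation*}
which does not ``follow from minimality'' by itself. The paper proves it by observing that $\hatX$ solves a Lyapunov equation for this matrix with right-hand side $-(C\tp C + (1-\gamma^{-2})\hatX BB\tp\hatX)$, whose observability (and hence the stability conclusion) again uses $\gamma>1$ to keep $(1-\gamma^{-2})\hatX BB\tp\hatX$ positive semi-definite. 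You should add this verification explicitly; without it the application of \Cref{thm:bernstein-5.6} is not justified.
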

\begin{proof}
    The proof is carried out in four steps.
    \begin{enumerate}[(i)]
        \item
            We show that $\hat{Y}$ is given by $\hat{Y}=Q\inv$ and that $\hat{X}$ is positive definite. \label{item:mod-hinf-i}
        \item
            We show that the control system is \portHamiltonian. \label{item:mod-hinf-ii}
        \item
            We show that
            \begin{equation*}
                A+\gamma^{-2} \hat{Y} C\tp C + (\gamma^{-2} \hat{Y} C\tp C \hat{Y} - BB\tp) \hat{X}
            \end{equation*}
            is asymptotically stable and that
            \begin{equation*}
                \big( A + \gamma^{-2} \hat{Y} C\tp C + \hat{X}\inv C\tp C, ~~ \gamma^{-1} [C\tp C + \hat{X} B B\tp \hat{X}]^{1/2} \big)
            \end{equation*}
            is observable. \label{item:mod-hinf-iii}
        \item
            Using \Cref{thm:interconnection-posreal}, we argue that $\mathbf{A}$ is asymptotically stable.
            Then, together with \ref{item:mod-hinf-iii}, \Cref{thm:bernstein-5.6} shows the claim.\label{item:mod-hinf-iv}
    \end{enumerate}

    Let us begin to show \ref{item:mod-hinf-i}.
    Due to the assumed minimality of $(A,B,C)$, it follows from $\gamma > 1$ and Hautus tests that $(A\tp,(1-\gamma^{-2})C\tp C)$ is stabilizable and $(A\tp, (1-\gamma^{-2})BB\tp + 2R)$ is detectable.
    Hence,~\eqref{hinf-mod-f} has a unique stabilizing solution.
    This solution is given by $\hat{Y} = Q\inv$, since $Q$ is symmetric positive definite and we have
    \begin{align*}
      & ~ A Q\inv + Q\inv A\tp - (1-\gamma^{-2})Q\inv C\tp CQ\inv + (1-\gamma^{-2})BB\tp  + 2R \\
    = &\,(J-R) + (-J-R) - (1-\gamma^{-2}) B B\tp + (1-\gamma^{-2}) B B\tp + 2R = 0.
    \end{align*}
    Concerning $\hat{X}$, notice that the stabilizability of $(A+\gamma^{-2} B B\tp Q, (1-\gamma^{-2})BB\tp)$ and the observability of $\big(A+\gamma^{-2}\hat{Y}C\tp C, ~ C\tp C\big)$ follow from the controllability of $(A,B)$ and the observability of $(A,C)$ using Hautus tests.
    Hence, the stabilizing solution $\hat{X}$ is symmetric positive definite.

    To show that $(\hat{A},\hat{B},\hat{C})$ is \portHamiltonian, we define
    \begin{equation*}
        \hat{J} := \tfrac12 (\hat{A}\hat{X}\inv - \hat{X}\inv \hat{A}\tp), ~ \hat{R} := - \tfrac12 (\hat{A}\hat{X}\inv + \hat{X}\inv \hat{A}\tp), \und \hat{Q}:= \hat{X}.
    \end{equation*}
    By definition we have $\hat{A} = (\hat{J}-\hat{R})\hat{Q}$ and $\hat{B} = B$, so $\hat{B}\tp \hat{X} = B\tp \hat{X} = \hat{C}$.
    Since $\hat{J}$ is skew-symmetric by definition and $\hat{Q}=\hat{X}$ is symmetric positive definite, it remains to show that $\hat{R}$ is positive semi-definite.
    Notice that
    \begin{align}
      & ~ \hat{A}\tp \hat{X} + \hat{X} \hat{A} \nonumber \\*
    = & ~\!(A\tp - (1-\gamma^{-2}) C\tp C \hat{Y} - \hat{X} B B\tp) \hat{X} + \hat{X} (A - (1-\gamma^{-2})\hat{Y} C\tp C - B B\tp \hat{X}) \nonumber \\
    = & ~\!(A\tp + \gamma^{-2} C\tp C \hat{Y}) \hat{X} + \hat{X} (A + \gamma^{-2} \hat{Y} C\tp C) - 2 \hat{X} B B\tp \hat{X} - C\tp C \hat{Y} \hat{X} - \hat{X} \hat{Y} C\tp C. \nonumber \\
    \intertext{By plugging in \eqref{hinf-mod-c} and using $\hat{Y}=Q\inv$, we obtain}\nonumber
      & ~ \hat{A}\tp \hat{X} + \hat{X} \hat{A} \nonumber \\*
    = & - \gamma^{-2} \hat{X} B B\tp \hat{X} - C\tp C - \hat{X} B B\tp \hat{X} - C\tp B\tp \hat{X} - \hat{X} B C \nonumber \\
    = & - (C+B\tp \hat{X})\tp (C+B\tp \hat{X}) - \gamma^{-2} \hat{X} B B\tp \hat{X} \preceq 0. \label{eq:mhinf-definiteness}
    \end{align}
    Using Sylvester's law of inertia and $\hat{X}\inv = \hat{X}\mtp$, we conclude
    \begin{equation*}
        - 2 \hat{R} = \hat{A}\hat{X}\inv + \hat{X}\inv \hat{A}\tp = \hat{X}\inv (\hat{A}\tp \hat{X} + \hat{X} \hat{A}) \hat{X}\inv \preceq 0,
    \end{equation*}
    so $\hat{R}$ is symmetric positive semi-definite and the control system is \portHamiltonian.

    Regarding \ref{item:mod-hinf-iii}, first notice that
    \begin{align*}
        & ~ A + \gamma^{-2} \hat{Y} C\tp C + (\gamma^{-2} \hat{Y} C\tp C \hat{Y} - B B\tp)\hat{X} \\*
    = & ~ A + \gamma^{-2} \hat{Y} C\tp C - (1-\gamma^{-2}) BB\tp \hat{X} =: A_1.
    \end{align*}
    To show that this matrix is asymptotically stable, we use a standard fact regarding the solutions of Lyapunov equations associated with observable systems.
    For that, first note that $(A_1, [C\tp C + (1-\gamma^{-2}) \hat{X} B B\tp \hat{X}]^{1/2})$ is observable if and only if $(A_1, C\tp C + (1-\gamma^{-2})\hat{X} B B\tp \hat{X})$ is observable.
    Again using Hautus tests and $\gamma > 1$, we see that the latter matrix pair is indeed observable.
    Now we may deduce that $A_1$ is asymptotically stable if the Lyapunov equation
    \begin{align*}
        (A + \gamma^{-2} \hat{Y} C\tp C - (1-\gamma^{-2})B B\tp \hat{X})\tp P + P (A+\gamma^{-2}  \hat{Y} C\tp C - (1-\gamma^{-2}) BB\tp \hat{X} ) \\* + ~\! C\tp C + (1-\gamma^{-2})\hat{X} B B\tp \hat{X} = 0
    \end{align*}
    has a solution $P = P\tp \succ 0$.
    In fact, this solution is $P = \hat{X}$, since we may rearrange~\eqref{hinf-mod-c} as
    \begin{equation*}
        (A+\gamma^{-2} \hat{Y} C\tp C)\tp \hat{X} + \hat{X} (A+\gamma^{-2}\hat{Y} C\tp C) - (1-\gamma^{-2}) \hat{X} B B\tp \hat{X} = - C\tp C.
    \end{equation*}
    It remains to show that $\big( A + \gamma^{-2} \hat{Y} C\tp C + \hat{X}\inv C\tp C, ~ \gamma^{-1} [C\tp C + \hat{X} B B\tp \hat{X}]^{1/2} \big)$ is observable.
    Again, this is equivalent to the observability of
    \begin{equation*}
        \big( A + \gamma^{-2} \hat{Y} C\tp C + \hat{X}\inv C\tp C, ~ \gamma^{-2} (C\tp C + \hat{X} B B\tp \hat{X}) \big),
    \end{equation*}
    which follows from the observability of $(A,C)$.

    To show \ref{item:mod-hinf-iv}, first note that both $(A,B,C)$ and $(\hat{A},\hat{B},\hat{C})$ satisfy the Lur'e equations.
    If $(\hat{A},\hat{B},\hat{C})$ satisfies the strong Lur'e equations, then the asymptotic stability of $\mathbf{A}$ follows from \Cref{thm:interconnection-posreal}.
\end{proof}

As we have already mentioned, our approach is based on the results of~\cite{HadBW93}.
Let us remark some key differences of the two approaches.

\begin{remark}
    In~\cite{HadBW93}, the assumption $R_1 \succ C\tp R_2\inv C$ is made, which guarantees that the transfer function of the controller $(\hat{A},\hat{B},\hat{C})$ is strictly positive real in the sense of~\eqref{spr-def-1-31}.
    Our choices for $R_1$ and $R_2$ were made to more closely resemble the unmodified $\hinf$ control and filter equations.
    As a consequence, the matrix inequality $R_1 \succ C\tp R_2\inv C$ becomes an equality and the transfer function of $(\hat{A},\hat{B},\hat{C})$ does not necessarily have to satisfy the strong Lur'e equations.
    Since \Cref{thm:interconnection-posreal} is used to show the asymptotic stability of the closed loop matrix $\mathbf{A}$, and in turn that $\norm{T_{zw}}_\hinf < \gamma$ holds true, we needed to assume that $(\hat{A},\hat{B},\hat{C})$ satisfies the strong Lur'e equations.
    This differs from the results of~\cite{BreMS21}, where only minimality of the \portHamiltonian system $(A,B,C)$ was assumed.
\end{remark}

\begin{remark}
    We also see that the technical assumption $\gamma > 1$ was required to ensure that $(1-\gamma^{-2}) BB\tp$ is positive semi-definite.
    This allowed us to show the asymptotic stability of $A_1 = A + \gamma^{-2} \hat{Y} C\tp C - (1-\gamma^{-1}) B B\tp \hat{X}$ via the observability of a surrogate system.
    % Also,~\cite{HadBW93} states \Cref{thm:bernstein-5.6} without the assumptions of asymptotic stability of~\eqref{HadBW93-3.1-as} and the observability of~\eqref{HadBW93-3.1-obsv}.
    % As a consequence, in the proof of the counterpart of \Cref{thm:mhinf}, step \ref{item:mod-hinf-iii} is omitted and the assumption $\gamma > 1$ can be relaxed.
    % However, the new assumption $V_1 = 2R + (1-\gamma^{-2})BB\tp \succ 0$ is needed to prove the results of~\cite{HadBW93}.
    % For small values of $\gamma$ also this assumption can fail.
    As noted in~\cite{MusG91}, our assumption $\gamma > 1$ is not a severe restriction, since for minimal systems $\gamma_0 \leq 1$ is only possible when the system matrix $A$ is asymptotically stable and has Hankel norm less than one.
\end{remark}

\begin{remark}
    If we compare the classical and modified $\hinf$ filter equations, which read as
    \begin{equation*}
        A Y + Y A\tp - (1-\gamma^{-2}) Y C\tp C Y + BB\tp = 0
    \end{equation*}
    and
    \begin{equation*}
        A \hat{Y} + \hat{Y} A\tp - (1-\gamma^{-2}) \hat{Y} C\tp C \hat{Y} + (1-\gamma^{-2})BB\tp + 2R = 0,
    \end{equation*}
    respectively, we see that, similar to the LQG case discussed in~\cite{BreMS21}, the filter equation is modified by adding the term $-\gamma^{-2} B B\tp + 2R$.
    As we have seen in the proof \Cref{thm:mhinf}, this ensures that $\hat{Y} = Q\inv$, which is another similarity to the LQG case.

    Unlike the LQG case, the classical and modified $\hinf$ control equations differ. They read as
    \begin{equation*}
        A\tp X + XA - (1-\gamma^{-2}) X B B\tp X + C\tp C = 0
    \end{equation*}
    and
    \begin{equation*}
        (A + \gamma^{-2} \hat{Y} C\tp C)\tp \hat{X} + \hat{X} (A + \gamma^{-2} \hat{Y} C\tp C) - (1-\gamma^{-2}) \hat{X} B B\tp \hat{X} + C\tp C = 0.
    \end{equation*}
    Since the solution $\hat{Y} = Q\inv$ is known a~priori and $\hat{Y} C\tp = B$, the modified $\hinf$ filter and control equations remain decoupled.
\end{remark}

\begin{remark}
    Notice that similar to the limiting behavior of the classical $\hinf$ controller, which approaches the classical LQG controller as $\gamma \to \infty$, taking the limit $\gamma \to \infty$ recovers the structure-preserving LQG controller developed in~\cite{BreMS21}.
\end{remark}

In \Cref{thm:mhinf}, the assumption that $(\hat{A},\hat{B},\hat{C})$ satisfies the strong Lur'e equations is a significant loss of generality and not satisfactory.
In order to overcome this assumption, note that $(\hat{A},\hat{B},\hat{C})$ satisfies the strong Lur'e equations when the matrix in~\eqref{mhinf-definiteness} is definite and $\hat{B}$ has full column rank.
As we will see, the former is ensured if we choose a symmetric positive definite matrix $P\in\RR^{n,n}$ and replace $R_1 = C\tp C$ by $R_1 = C\tp C + P$.
In turn, we also need to alter $V_1$ to $V_1 = 2R + (1-\gamma^{-2}) B B\tp - \gamma^{-2} Q\inv P Q\inv$.
The resulting modifications of the AREs are stated in \Cref{thm:mhinf-v2}.
Note that in order to satisfy our assumption $V_1 = D_1 D_1\tp$, the matrix $V_1$ needs to be positive semi-definite.

\begin{theorem}[structure-preserving $\hinf$ control -- version with $P$]\label{thm:mhinf-v2}
    Suppose $(J,R,Q,B)$ is a minimal \portHamiltonian system and that $B$ has full column rank, define $A:=(J-R)Q$ and $C:=B\tp Q$, and assume $\gamma > 1$.
    Let the symmetric positive definite matrix $P \in \RR^{n,n}$ be chosen such that $2R + (1-\gamma^{-2}) B B\tp - \gamma^{-2} Q\inv P Q\inv$ is positive semi-definite.
    Then $\tilde{Y}=Q\inv$ is a symmetric positive definite solution to the modified $\hinf$ filter equation
    \begin{equation}\label{eq:hinf-mod-f2}
        \begin{aligned}
                A \tilde{Y} + \tilde{Y} A\tp + \tilde{Y} ((\gamma^{-2}-1)C\tp C + \gamma^{-2}P) \tilde{Y}  &
            \\ + (1-\gamma^{-2}) B B\tp + 2R - \gamma^{-2} Q\inv P Q\inv                        & = 0.
        \end{aligned}
    \end{equation}
    Assume that $\tilde{X}=\tilde{X}\tp \succ 0$ is a solution to the modified $\hinf$ control equation
    \begin{equation}\label{eq:hinf-mod-c2}
        \begin{aligned}
            (A + \gamma^{-2} \tilde{Y} (C\tp C + P))\tp \tilde{X} + \tilde{X} (A+ \gamma^{-2}\tilde{Y}(C\tp C + P)) & \\
            - (1-\gamma^{-2})\tilde{X} B B\tp \tilde{X} + C\tp C + P & = 0,
        \end{aligned}
    \end{equation}
    and define a control system via
    \begin{equation*}
        \tilde{A} := A - (1-\gamma^{-2})\tilde{Y} C\tp C - B B\tp \tilde{X} + \gamma^{-2}\tilde{Y} P,
        ~~ \tilde{B} := \tilde{Y} C\tp, ~~
        \tilde{C} := B\tp \tilde{X}.
    \end{equation*}
    Then $(\tilde{A},\tilde{B},\tilde{C})$ is \portHamiltonian and the transfer function $T_{zw}$ of the closed loop system $(\mathbf{A},\mathbf{D},\mathbf{E})$ satisfies
    \begin{equation*}
        \norm{T_{zw}}_\hinf < \gamma.
    \end{equation*}
\end{theorem}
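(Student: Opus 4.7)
My plan is to follow the four-step strategy of the proof of \Cref{thm:mhinf}, with the added matrix $P$ playing the role of forcing strictness in the key Lyapunov inequality so that the strong Lur'e equations required by \Cref{thm:interconnection-posreal} hold automatically and no additional assumption on the controller is needed. I would first verify by direct substitution that $\tilde Y = Q\inv$ solves \eqref{hinf-mod-f2}: using $AQ\inv + Q\inv A\tp = -2R$ and $CQ\inv = B\tp$, the $\gamma^{-2}$-terms and $P$-terms cancel against the corresponding pieces of $V_1$ exactly as in the proof of \Cref{thm:mhinf}.

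The central step is to show that $(\tilde A, \tilde B, \tilde C)$ is \portHamiltonian. Setting $\tilde J, \tilde R, \tilde Q:=\tilde X$ as before, I would expand $\tilde A\tp \tilde X + \tilde X \tilde A$ using the new definition of $\tilde A$ and substitute \eqref{hinf-mod-c2}. Applying the identities $\tilde Y C\tp = B$ and $C \tilde Y = B\tp$, the $\gamma^{-2}\tilde Y P$ contributions in $\tilde A$ combine with the $+P$ from $R_1 = C\tp C + P$ so that precisely
\begin{equation*}
    \tilde A\tp \tilde X + \tilde X \tilde A \;=\; -(C + B\tp \tilde X)\tp (C + B\tp \tilde X) \;-\; \gamma^{-2}\tilde X B B\tp \tilde X \;-\; P \;\prec\; 0,
\end{equation*}
the strict inequality being guaranteed by $P \succ 0$. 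Sylvester's law of inertia, exactly as in the proof of \Cref{thm:mhinf}, yields $\tilde R \succ 0$, so $(\tilde A, \tilde B, \tilde C)$ is \pH. Moreover $\tilde B = \tilde Y C\tp = Q\inv Q B = B$ has full column rank by hypothesis, and by \Cref{rem:kyp} the controller therefore satisfies the strong Lur'e equations.

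The last step is to verify the hypotheses of \Cref{thm:bernstein-5.6} with $V_2 = R_2 = I$, $R_1 = C\tp C + P$, and $V_1 = 2R + (1-\gamma^{-2}) B B\tp - \gamma^{-2} Q\inv P Q\inv \succeq 0$. Stabilizability of $(A,B)$ and detectability of $(A,C)$ follow from minimality via the Hautus test, and the AREs are solved by the given $\tilde X, \tilde Y$. Since $\tilde Y C\tp V_2\inv C \tilde Y = B B\tp$, the matrix whose asymptotic stability is required reduces to $A + \gamma^{-2}\tilde Y(C\tp C + P) - (1-\gamma^{-2}) B B\tp \tilde X$, and rearranging \eqref{hinf-mod-c2} exhibits $\tilde X$ as a Lyapunov certificate with strictly negative forcing $-(1-\gamma^{-2}) \tilde X B B\tp \tilde X - C\tp C - P$. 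The observability condition is trivial because $C\tp C + P + \tilde X B B\tp \tilde X \succ 0$, so its square root has full rank $n$. Asymptotic stability of $\mathbf A$ then follows from \Cref{thm:interconnection-posreal}: the plant is minimal and \portHamiltonian (hence satisfies the Lur'e equations by the discussion after \eqref{JR-lure}) and the controller satisfies the strong Lur'e equations by the previous step. \Cref{thm:bernstein-5.6} now delivers $\norm{T_{zw}}_\hinf < \gamma$.

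I expect the main technical obstacle to be the bookkeeping in the Lyapunov expansion of the second step: $P$ enters in two places (through $R_1$ and through $\tilde A$), and one must track these contributions carefully so that exactly one copy of $-P$ survives and turns the old non-strict bound from \Cref{thm:mhinf} into a strict one. Beyond this, the argument is a routine adaptation of the proof of \Cref{thm:mhinf}.
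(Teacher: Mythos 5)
Your proposal is correct and follows essentially the same four-step route as the paper's proof: direct verification of $\tilde{Y}=Q\inv$, the Lyapunov expansion yielding $\tilde{A}\tp\tilde{X}+\tilde{X}\tilde{A} = -P-(C+B\tp\tilde{X})\tp(C+B\tp\tilde{X})-\gamma^{-2}\tilde{X}BB\tp\tilde{X}\prec 0$ so that the strong Lur'e equations hold, and then \Cref{thm:interconnection-posreal} and \Cref{thm:bernstein-5.6}. The only cosmetic difference is that you conclude asymptotic stability of $A_2$ directly from the strictly definite Lyapunov forcing term (which $P\succ 0$ indeed provides) rather than via the observability argument the paper uses, and you should note in passing that the asymptotic stability of $\tilde{A}$ itself (required for the strong Lur'e equations) also follows from this strict inequality together with \cite[Lemma 3.1]{MehMS16}, as the paper does.
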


\begin{proof}
    The proof can be carried out very similarly to the proof of \Cref{thm:mhinf}.
    To see that a solution to~\eqref{hinf-mod-f2} is $\tilde{Y} = Q\inv$, notice that
    \begin{align*}
        (J-R) + (-J - R) + (\gamma^{-2} - 1) BB\tp + \gamma^{-2} Q\inv P Q\inv & \\* + (1-\gamma^{-2}) BB\tp + 2R - \gamma^{-2} Q\inv P Q\inv & = 0.
    \end{align*}
    To show that $(\tilde{A},\tilde{B},\tilde{C})$ is \portHamiltonian, we proceed as in \Cref{thm:mhinf} and define
    \begin{equation*}
        \tilde{J} := \tfrac12 (\tilde{A} \tilde{X}\inv - \tilde{X}\inv \tilde{A}\tp), ~ \tilde{R} := -\tfrac12 (\tilde{A} \tilde{X}\inv + \tilde{X}\inv \tilde{A}\tp), \und
        \tilde{Q} := \tilde{X}.
    \end{equation*}
    Then $\tilde{B} = \tilde{Y} C\tp = B$ and $\tilde{C} = B\tp \tilde{X} = \tilde{B}\tp \tilde{X}$.
    As $\tilde{J}$ is skew-symmetric by definition and $\tilde{Q} = \tilde{X}$ is symmetric positive definite, it remains to show that $\tilde{R}$ is symmetric positive semi-definite.
    Notice that
    \begin{equation*}
      \tilde{A}\tp \tilde{X} + \tilde{X} \tilde{A} = - P - (C + B\tp \tilde{X})\tp (C + B\tp \tilde{X}) - \gamma^{-2} \tilde{X} BB\tp \tilde{X} \prec 0,
    \end{equation*}
    so $\tilde{R} \succ 0$.
    To see that $\tilde{A}$ is asymptotically stable, either see~\cite[Lemma 3.1]{MehMS16} or use the observability of $(\tilde{A},P)$ together with the Hautus test and the existence of positive definite solutions to Lyapunov equations associated with observable systems.
    Further, notice that $\sigma_{\min}(\tilde{B}) = \sigma_{\min}(B) > 0$ since $B$ has full column rank.
    In particular, $(\tilde{A},\tilde{B},\tilde{C})$ satisfies the strong Lur'e equations and the asymptotic stability of $\mathbf{A}$ follows from \Cref{thm:interconnection-posreal}.
    To use \Cref{thm:bernstein-5.6} and finish the proof, we need to show that
    \begin{equation*}
        A + \gamma^{-2} \tilde{Y} C\tp C + \gamma^{-2} \tilde{Y} P + (\gamma^{-2} \tilde{Y} C\tp C \tilde{Y} - B B\tp) \tilde{X}
    \end{equation*}
    is asymptotically stable and that
    \begin{equation}\label{eq:mhinf2-obsv}
        \big( A + \gamma^{-2} \tilde{Y} (C\tp C + P) + \tilde{X}\inv (C\tp C + P), ~ \gamma^{-1} [C\tp C + P + \tilde{X} BB\tp \tilde{X}]^{1/2}\big)
    \end{equation}
    is observable.
    The proof of both of these claims follows along the lines of \Cref{thm:mhinf} and is given only briefly here.
    Observe that
    \begin{align*}
        & ~ A + \gamma^{-2} \tilde{Y} C\tp C + \gamma^{-2} \tilde{Y} P + (\gamma^{-2} \tilde{Y} C\tp C \tilde{Y} - B B\tp) \tilde{X} \\
    = & ~ A + \gamma^{-2} \tilde{Y} (C\tp C + P) - (1-\gamma^{-2}) BB\tp \tilde{X} =: A_2.
    \end{align*}
    Again, note that $(A_2, C\tp C + (1-\gamma^{-2}) \tilde{X} BB\tp \tilde{X} + P)$ is observable, which allows us to conclude that $A_2$ is asymptotically stable, since $\tilde{X}$ solves the Lyapunov equation
    \begin{align*}
        (A + \gamma^{-2} \tilde{Y} (C\tp C + P) - (1-\gamma^{-2})B B\tp \tilde{X})\tp \tilde{X} & \\
    + ~ \tilde{X} (A+\gamma^{-2} \tilde{Y} (C\tp C + P) - (1-\gamma^{-2}) BB\tp \tilde{X} ) & \\
    + ~ C\tp C + (1-\gamma^{-2})\tilde{X} B B\tp \tilde{X} + P & = 0.
    \end{align*}
    The observability of~\eqref{mhinf2-obsv} can be seen with the Hautus test using the invertibility of $P$.
\end{proof}

Let us note that in \Cref{thm:mhinf} we were able to deduce that stabilizing solutions to the Riccati equations exist, whereas in \Cref{thm:mhinf-v2} we did not show that $\tilde{Y}=Q\inv$ is stabilizing and needed to assume that a solution $\tilde{X} = \tilde{X}\tp \succ 0$ to~\eqref{hinf-mod-c2} exists.
The former is because the quadratic term in~\eqref{hinf-mod-f2} is indefinite and hence the positive definite solution $\tilde{Y} = Q\inv$ does not necessarily have to be stabilizing.
Necessary and sufficient conditions for the indefinite case are discussed in, e.g.,~\cite{Che92}.
Regarding the existence of a suitable solution to~\eqref{hinf-mod-c2}, see~\Cref{rem:existence-tildeX}.

\begin{remark}\label{rem:existence-tildeX}
    We can ensure that a solution $\tilde{X}=\tilde{X}\tp \succ 0$ to~\eqref{hinf-mod-c2} exists if
    \begin{equation} \label{eq:obsv-tildeX}
        \big( A+ \gamma^{-2} \tilde{Y} (C\tp C+P), ~ C\tp C + P \big)
    \end{equation}
    is observable and
    \begin{equation} \label{eq:stab-tildeX}
        \big( A+ \gamma^{-2} \tilde{Y} (C\tp C+P), ~ (1-\gamma^{-2}) BB\tp \big)
    \end{equation}
    is stabilizable.
    The observability of~\eqref{obsv-tildeX} follows from the invertibility of~$P$.
    Regarding the stabilizability of~\eqref{stab-tildeX}, Hautus tests, the fact that $B$ is assumed to have full rank and $\gamma > 1$ reveal that the matrix pair is stabilizable if and only if
    \begin{equation*}
        (A+\gamma^{-2} \tilde{Y} P,B)
    \end{equation*}
    is stabilizable.

    As we will see in \Cref{sec:modelreduction}, taking $P$ as a multiple of the Hamiltonian $Q$ is quite natural.
    In this special case, where $P=\alpha Q$ with $\alpha \in [0,\infty)$, we have $A + \gamma^{-2} \tilde{Y} P = A + \gamma^{-2} \alpha I$.
    Since identity shifts do not change the rank of the Kálmán matrix, we can deduce the controllability of $(A+\gamma^{-2}\alpha I, B)$.
    Hence, in this case a solution $\tilde{X}=\tilde{X}\tp \succ 0$ to~\eqref{hinf-mod-c2} exists.
\end{remark}

\begin{remark}
    Let us point out that the results can easily be extended to co-energy variable formulations of \pH systems.
    Such formulations are obtained by defining the new variable $z:= Qx$ and the matrix $E:=Q\inv$ and writing the system $(J,R,Q,B)$ as
    \begin{align*}
        E z & = (J-R)z + B u, \\
        y & = B\tp z.
    \end{align*}
\end{remark}

\section{Applications to Model Reduction}
\label{sec:modelreduction}
In this section, we will show how the algebraic Riccati equations from \Cref{thm:mhinf} and \Cref{thm:mhinf-v2} can be used to develop a structure-preserving model reduction method.
Our method is based on system balancing, which utilizes a change of coordinates that simultaneously diagonalizes the solutions to a pair of Lyapunov or Riccati equations.
Then, certain parts of the balanced system and their corresponding states are truncated.
For a general overview on balancing-related methods for model reduction we refer to, e.g.,~\cite{Ant05,BreS21}.
% Depending on the equations considered, the resulting balanced truncation procedure preserves or exploits different properties of the system in question.
% When the system is connected to an LQG or $\hinf$ controller, employing the corresponding balancing method can counteract undesirable propagation of errors.
\emph{Classical $\hinf$ balancing}, i.e.~balancing with respect to the classical $\hinf$ algebraic Riccati equations~\eqref{hinf-cls-c} and~\eqref{hinf-cls-f}, was extensively studied in~\cite{MusG91}.
However, the classical approach has a major drawback when it comes to the approximation of \pH systems: the \portHamiltonian structure is not preserved during the model reduction process.
In other words, even when the full order system is \portHamiltonian, the reduced order model constructed by the balancing approach will not necessarily be~\pH.
As we will see, if the algebraic Riccati equations found in \Cref{thm:mhinf} and \Cref{thm:mhinf-v2} are used for system balancing, then the resulting balanced truncation method will be structure-preserving.
Accordingly, we will call this procedure \emph{modified $\hinf$ balanced truncation}.
% As we will see, in contrast to the classical method, this approach has the benefit of being structure-preserving.
% In other words, when the full order system is \portHamiltonian, then the reduced order model constructed by the balancing approach will also be \pH.
Unfortunately, a few difficulties arise during the study of the model reduction error, and it is unclear how an a~priori error bound in the fashion of~\cite{MusG91} can be stated.

Now, let us proceed to our modified $\hinf$ balanced truncation method.
The first step is to balance the Gramians in question, i.e.~to simultaneously diagonalize the solutions $\hat{Y}=\tilde{Y}=Q\inv$ and $\hat{X},\tilde{X}$ to the algebraic Riccati equations from \Cref{thm:mhinf} and \Cref{thm:mhinf-v2}.
Here, we assume that the matrix pair $(A+\gamma^{-2}\tilde{Y}P,B)$ is stabilizable, which ensures that~\eqref{hinf-mod-c2} has a positive definite stabilizing solution $\tilde{X}$.
To check if standard balancing methods such as square root balancing are applicable, we examine if a state transformation $x\to Tx$ transforms the Gramians as $Y \to T Y T\tp$ and $X \to T\mtp \hat{X} T\inv$.
Here $Y\in \{ \hat{Y},\tilde{Y}\}$ and $X \in \{ \hat{X},\tilde{X} \}$, i.e.~this transformation property has to hold for both pairs of algebraic Riccati equations found in \Cref{thm:mhinf,thm:mhinf-v2}.
Let us focus on the second pair of algebraic Riccati equations, the pair~\eqref{hinf-mod-f2} and~\eqref{hinf-mod-c2}, which is sufficient since for $P=0$ the pair~\eqref{hinf-mod-f} and~\eqref{hinf-mod-c} is recovered.
Is is easy to see that a state transformation $x\to Tx$ transforms $R \to T R T\tp$ and $Q \to T\mtp Q T\inv$.
Let us assume that the state transformation yields $P \to \tilde{P}$.
If $\hat{Y}$ is a solution to
\begin{equation*}
    A \hat{Y} + \hat{Y} A\tp + \hat{Y} ((\gamma^{-2}-1)C\tp C + \gamma^{-2}P) \hat{Y} + (1-\gamma^{-2}) BB\tp + 2R + \gamma^{-2} Q\inv P Q\inv = 0,
\end{equation*}
then we want $\bar{Y} = T \hat{Y} T\tp$ to solve
\begin{align*}
    T A T\inv \bar{Y} + \bar{Y} T\mtp A\tp T\tp + \bar{Y} ((\gamma^{-2}-1) T\mtp C\tp C T\inv + \gamma^{-2}\tilde{P}) \bar{Y} \\* + (1-\gamma^{-2}) T B B\tp T\tp + 2 T R T\tp + \gamma^{-2}T Q\inv T\tp \tilde{P} T Q\inv T\tp = 0.
\end{align*}
This is the case if and only if the state transformation acts on $P$ as
\begin{equation}\label{eq:Pcond}
    P \to T\mtp P T\inv = \tilde{P},
\end{equation}
which is satisfied for $P=\alpha Q$, where $\alpha \in [0,\infty)$,
or more generally $P=\alpha S$, where $S$ is a solution to the KYP-LMI
\begin{equation}\label{eq:kyp-P-S}
    \begin{bmatrix} - SA - A\tp S & C\tp - S B \\ C - B\tp S & 0 \end{bmatrix} \succeq 0, ~~ S = S\tp \succ 0.
\end{equation}
If~\eqref{Pcond} holds and $\hat{X}$ is a solution to
\begin{equation*}
    (A+\gamma^{-2} \hat{Y}(C\tp C + P))\tp \hat{X} + \hat{X} (A+\gamma^{-2}\hat{Y}(C\tp C + P)) - (1-\gamma^{-2})\hat{X} BB\tp \hat{X} + C\tp C + P = 0,
\end{equation*}
then the state transformation yields
\begin{align*}
    (T A T\inv + \gamma^{-2} T \hat{Y} (C\tp C + P) T\inv)\tp \bar{X} + \bar{X} (T A T\inv + \gamma^{-2} T \hat{Y} (C\tp C + P) T\inv) \\* - (1-\gamma^{-2})\bar{X} T B B\tp T\tp \bar{X} + T\mtp C\tp C T\inv + T\mtp P T\inv = 0
\end{align*}
which is solved by $\bar{X} = T\mtp \hat{X} T\inv$.
Hence,~\eqref{Pcond} is sufficient to ensure that balancing is possible.

In~\cite{PolV12} it was shown that a partitioning
\begin{equation}\label{eq:ph-partition}
    J = \begin{bmatrix} J_{11} & J_{12} \\ J_{21} & J_{22} \end{bmatrix}, ~
    R = \begin{bmatrix} R_{11} & R_{12} \\ R_{21} & R_{22} \end{bmatrix}, ~
    Q = \begin{bmatrix} Q_{11} & Q_{12} \\ Q_{21} & Q_{22} \end{bmatrix}, \und
    B = \begin{bmatrix} B_1 \\ B_2 \end{bmatrix}
\end{equation}
leads to the system
\begin{equation*}
    (J_{11},R_{11},Q_{11}-Q_{12} Q_{22}\inv Q_{21},B_1)
\end{equation*}
being \portHamiltonian.
Using this result we can now show that our modified $\hinf$ balanced truncation method preserves \portHamiltonian structure.
\begin{theorem}\label{thm:mhinf-preserves-ph}
    Suppose $(J,R,Q,B)$ is a minimal \portHamiltonian system and $P = \eta S$, where $\eta \in [0,\infty)$ and $S$ solves the KYP-LMI~\eqref{kyp-P-S}.
    If $(J,R,Q,B)$ is balanced with respect to~\eqref{hinf-mod-f2} and~\eqref{hinf-mod-c2}, and partitioned as in~\eqref{ph-partition}, then the system
    \begin{equation*}
        (J_{11},R_{11},Q_{11},B_{1})
    \end{equation*}
    is \portHamiltonian.
    In particular, truncation of $((J-R)Q,B,B\tp Q)$ will preserve the \portHamiltonian structure.
\end{theorem}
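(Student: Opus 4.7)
The plan exploits a key consequence of balancing with respect to~\eqref{hinf-mod-f2} and~\eqref{hinf-mod-c2}: since $\tilde{Y} = Q\inv$ by \Cref{thm:mhinf-v2}, simultaneously diagonalizing $\tilde{Y}$ and $\tilde{X}$ forces the transformed Hamiltonian to be diagonal. In the partition~\eqref{ph-partition} this annihilates the off-diagonal blocks $Q_{12}$ and $Q_{21}$, collapsing the Polyuga--Van der Schaft Schur complement $Q_{11} - Q_{12} Q_{22}\inv Q_{21}$ to $Q_{11}$, at which point the cited reduction result immediately delivers \pH structure.

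I would proceed in three steps. First, verify that balancing is well-defined in this setting: the discussion preceding~\eqref{Pcond} already shows that $P = \eta S$ with $S$ solving~\eqref{kyp-P-S} transforms as $P \mapsto T\mtp P T\inv$ under state changes $x \mapsto Tx$, so both Riccati equations transform consistently and standard techniques such as square root balancing apply. Existence of a suitable $\tilde{X} = \tilde{X}\tp \succ 0$ is guaranteed for $\eta > 0$ by \Cref{rem:existence-tildeX}, while the case $\eta = 0$ reduces to balancing the pair~\eqref{hinf-mod-f} and~\eqref{hinf-mod-c} from \Cref{thm:mhinf}. Second, note that any balancing transformation $T$ satisfies $T \tilde{Y} T\tp = \Sigma$ with $\Sigma \succ 0$ diagonal; substituting $\tilde{Y} = Q\inv$ gives $\bar{Q} := T\mtp Q T\inv = \Sigma\inv$, which is itself diagonal. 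Third, apply the result of~\cite{PolV12} recalled just before the statement: it produces $(J_{11}, R_{11}, Q_{11} - Q_{12} Q_{22}\inv Q_{21}, B_1)$ as a \portHamiltonian system, and because $Q_{12} = Q_{21} = 0$ in the balanced realization this coincides with $(J_{11}, R_{11}, Q_{11}, B_1)$. Skew-symmetry of $J_{11}$ and positive (semi-)definiteness of $R_{11}$ and $Q_{11}$ follow automatically, being inherited by principal submatrices from $J$, $R$, and $Q$.

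The main (mild) obstacle is really the first step, namely ensuring that balancing is compatible with the auxiliary parameter $P$; once this compatibility is in place, the proof reduces to the essentially one-line observation that any balanced realization of a \pH system in which $Q\inv$ serves as one of the Gramians necessarily has a diagonal Hamiltonian, with the Polyuga--Van der Schaft reduction then doing the remaining work.
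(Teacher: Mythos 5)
Your proposal is correct and follows essentially the same route as the paper's own proof: balancing diagonalizes $\tilde{Y}=Q\inv$, hence $Q$ is diagonal in balanced coordinates, so $Q_{12}=Q_{21}=0$ and the Schur complement in the result of~\cite{PolV12} collapses to $Q_{11}$. The additional material on compatibility of the transformation with $P$ and existence of $\tilde{X}$ mirrors the discussion the paper places just before the theorem rather than inside its proof.
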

\begin{proof}
    Since the solution $\hat{Y}$ of~\eqref{hinf-mod-f2} is given by $\hat{Y} = Q\inv$, in balanced coordinates with respect to~\eqref{hinf-mod-f2} and~\eqref{hinf-mod-c2} the matrix $\hat{Y}\inv = Q$ is diagonal.
    In particular, the off-diagonal blocks $Q_{12}$ and $Q_{21}$ are zero, which implies
    \begin{equation*}
        Q_{11}-Q_{12} Q_{22}\inv Q_{21} = Q_{11}.
    \end{equation*}
    The claim then immediately follows from the previously mentioned results of~\cite{PolV12}.
\end{proof}

Note that to state \Cref{thm:mhinf-preserves-ph} we do not need to assume that $B$ has full column rank.
This assumption is only needed to prove the error bound $\norm{T_{zw}}_\hinf < \gamma$, which is not required for \Cref{thm:mhinf-preserves-ph}.

For most balanced truncation methods, there exist a priori error bounds for the model reduction error.
For the procedure presented here, such an error bound is not easily established using the standard methods.
The next remarks highlight the main difficulties in that regard.

\begin{remark}\label{rem:a-priori-error-bound}
    To state an a priori error bound for classical $\hinf$ balanced truncation, in~\cite{MusG91} the authors used explicit constructions of coprime factorizations of the transfer function.
    The coprime factors are constructed using the results of~\cite{MeyF87} and the algebraic Riccati equation~\eqref{hinf-cls-f}.
    The authors could then establish a connection between the coprime factors and the characteristic values of the method.
    Unfortunately, this procedure is no longer viable for the pair of modified $\hinf$ equations~\eqref{hinf-mod-f} and~\eqref{hinf-mod-c} (and~\eqref{hinf-mod-f2} and~\eqref{hinf-mod-c2}).
    We will focus on the first pair and define $\beta := (1-\gamma^{-2})^{1/2}$ as in~\cite{MusG91}.

    Concerning~\eqref{hinf-mod-f}, with our definition of $\beta$ the equation may be rewritten as
    \begin{equation*}
        A\hat{Y} + \hat{Y}A\tp - \beta^2 \hat{Y}C\tp C \hat{Y} + \beta^2 BB\tp + 2R = 0.
    \end{equation*}
    Scaling by $\beta^2$ results in
    \begin{equation*}
        A(\beta^2 \hat{Y}) + (\beta^2 \hat{Y}) A\tp - (\beta^2 \hat{Y}) C\tp C (\beta^2 \hat{Y}) + \beta^2 ((\beta B)(\beta B)\tp + 2 R) = 0.
    \end{equation*}
    Hence, if $[(\beta B)(\beta B)\tp + 2 R]^{1/2} = L$, then $\bar{Y} := \beta^2 \hat{Y}$ solves
    \begin{equation*}
        A\bar{Y} + \bar{Y}A\tp - \bar{Y}C\tp C \bar{Y} + (\beta L)(\beta L)\tp = 0.
    \end{equation*}
    Using the results of~\cite{MeyF87}, we obtain a (left) coprime factorization of
    \begin{equation*}
        \beta \tilde{G} = C(sI - A)\inv (\beta L)
    \end{equation*}
    which is in general different from $\beta G$.

    Similarly, scaling~\eqref{hinf-mod-c} by $\beta^2$ yields
    \begin{equation*}
        (A+\gamma^{-2}\hat{Y} C\tp C)\tp (\beta^2 \hat{X}) + (\beta^2 \hat{X})(A+\gamma^{-2}\hat{Y} C\tp C) - (\beta^2 \hat{X}) BB\tp (\beta^2 \hat{X}) + (\beta C)\tp (\beta C) = 0,
    \end{equation*}
    which is solved by $\bar{X} := \beta^2 \hat{X}$.
    Using the results of~\cite{MeyF87} we can only state a (right) coprime factorization of
    \begin{equation*}
        \beta \bar{G} = (\beta C) (sI - A - \gamma^{-2}\hat{Y} C\tp C)\inv B,
    \end{equation*}
    which is, again, generally different from $\beta G$.
\end{remark}

\begin{remark}
    Another difficulty arises when the second pair of algebraic Riccati equations,~\eqref{hinf-mod-f2} and~\eqref{hinf-mod-c2}, is used.
    Since the quadratic term in~\eqref{hinf-mod-f2} is inherently indefinite, we can no longer guarantee that a stabilizing solution exists.
    Even if such a stabilizing solution exists, it is not immediately clear that $Q\inv$ is this solution.
    Since the results of~\cite{MeyF87} rely on stabilizing solutions to the Riccati equations, this approach can no longer be used to construct normalized coprime factorizations.
    If we assume that $\tilde{Y}=Q\inv$ is the stabilizing solution to~\eqref{hinf-mod-f2}, then a similar reasoning as in \Cref{rem:a-priori-error-bound} applies to the second pair of AREs as well.
\end{remark}

\section{Numerical Experiments} \label{sec:numexp}
In this section, we provide two numerical examples that naturally allow for a \portHamiltonian formulation.
We consider a mass-spring-damper system as in~\cite{GugPBV12}, and an example of a DC motor from~\cite{VanJ14}.
Our focus is not on demonstrating that the new methods outperform existing ones, but rather on illustrating the theory.
This is why we only consider systems of moderate state space dimension.
We compare the $\hinf$ control scheme from \Cref{thm:mhinf,thm:mhinf-v2} with the classical $\hinf$ control scheme from~\cite{DoyGKF89}.
Further, we compare the modified $\hinf$ balanced truncation scheme with classical $\hinf$ balanced truncation from~\cite{MusG91}.
Since in~\cite{BreMS21} it was shown that the extremal solutions to the associated KYP-LMI play a particularly important role for model reduction, we pay special attention to these solutions as Hamiltonians.

All simulations were obtained using \matlab R2021b (i64) in conjunction with Rosetta~2 on an Apple M1 Pro Processor with 8 cores and 16GB of unified memory.
Further, let us make the following remarks on the implementation.
\begin{itemize}
    \item
    As we have mentioned earlier, \Cref{thm:mhinf-v2} does not require the solutions to~\eqref{hinf-mod-f2} and~\eqref{hinf-mod-c2} to be stabilizing.
    Nevertheless, all considered algebraic Riccati equations were solved using the \matlab routine \texttt{icare}, which computes stabilizing solutions.
    The features of the \texttt{Control System Toolbox} were used to calculate the $\hinf$ errors.
    For the experiments involving the modified $\hinf$ balanced truncation method we chose $\gamma = 2$.

    \item
    The extremal solutions $X_{\min}$ and $X_{\max}$ to the KYP-LMI
    \begin{equation}\label{eq:numexp-kyp}
        \begin{bmatrix} - A\tp X - X A & C\tp - XB \\ C - B\tp X & 0 \end{bmatrix} \succeq 0, ~ X = X\tp \succeq 0
    \end{equation}
    are computed using a regularization approach, see~\cite{Wil72} and the discussion above Theorem 2 therein, and an artificial feedthrough term $D+D\tp = 10^{-12}I$.

    \item
    To ensure numeric stability, numerically minimal realizations of the systems were obtained by following the approach of~\cite{BreMS21}, for which the truncation tolerance was chosen as $\varepsilon_{\text{trunc}} = 10^{-12}$.
    Additionally, a sign convention for encountered singular value and $QR$ decompositions was enforced.
\end{itemize}

For details on the mass-spring-damper system, we refer to~\cite{GugPBV12}.
In the DC motor model of~\cite{VanJ14}, the system matrices $J,R,Q$ and $B$ are given by
\begin{equation*}
    J = \begin{bmatrix} 0 & -k \\ k & 0 \end{bmatrix}, ~~ R = \begin{bmatrix} r & 0 \\ 0 & b \end{bmatrix}, ~~ Q = \begin{bmatrix} 1/l & 0 \\ 0 & 1/j \end{bmatrix} \und B = \begin{bmatrix} 1 \\ 0 \end{bmatrix},
\end{equation*}
where $k$ is the gyrator constant, $r>0$ is associated with a resistor in the circuit, $b>0$ models friction in the motor, $l>0$ is the inductor constant, and $j>0$ models the rotational inertia of the motor.

\subsection{$\hinf$ Performance}
First, we compare the structure-preserving $\hinf$ controller to the classical $\hinf$ controller in terms of the achieved closed loop $\hinf$ norm $\norm{T_{zw}}_{\hinf}$.

For the classical setting, the matrices $D_1$ and $E_1$ were obtained as in \Cref{rem:cls-hinf}.
In the modified case, these matrices were constructed by calculating (potentially semi-definite) Cholesky-like factors of $V_1$ and $R_1$ and extending the calculated factors with zero padding.
In both cases, the matrices $D_2$ and $E_2$ were chosen as
\begin{equation*}
    D_2 = \begin{bmatrix} 0 & I \end{bmatrix} \und E_2 = \begin{bmatrix} 0 \\ I \end{bmatrix}.
\end{equation*}
For $\gamma$, we considered values from $[1.05,3.95]$.

In \Cref{fig:hinf-performance-msd}, the mass-spring-damper system from~\cite{GugPBV12} is considered.
We see that our modified $\hinf$ controller is outperformed by the classical controller, although both controllers yield a closed loop performance within the prescribed~$\hinf$ bound.
Here, only the controller constructed by \Cref{thm:mhinf} is considered, since with $P=\alpha Q$ the matrix $V_1 = 2R + (1-\gamma^{-2})BB\tp - \gamma^{-2}Q\inv P Q\inv$ from \Cref{thm:mhinf-v2} was indefinite even for small values of $\alpha$.

Similar observations can be made in \Cref{fig:hinf-performance-dc}, where we compare the controller synthesis methods for a model of a DC motor~\cite{VanJ14}.
The model constants were chosen as $k=1,\: r=2,\: b=1,\: l=1$ and $j=2$.
Here, our modified $\hinf$ controller with $P=0$ is again outperformed by the classical $\hinf$ controller.
Further, the controller from \Cref{thm:mhinf-v2} with $P=Q$ is added to the comparison, and yields the worst performance of all considered controllers, but still clearly stays within the prescribed $\hinf$ bound.

\subsection{Model Reduction}
Now, let us illustrate the theoretical results of \Cref{sec:modelreduction}.

In \Cref{fig:cmp_mhinf}, we show the results obtained for the balancing method when the algebraic Riccati equations~\eqref{hinf-mod-f} and~\eqref{hinf-mod-c} from \Cref{thm:mhinf} are used for balancing.
Besides the canonical realization of the \pH system, the results for the realizations associated with the extremal solutions of \eqref{numexp-kyp} are included.
For comparison, the classical $\hinf$ balanced truncation approach is also added.
As was already observed similarly in~\cite{BreMS21}, the error clearly depends on the chosen representation, and the maximal solution $X_{\max}$ corresponds to the smallest model reduction error.
In this case, the approximation quality is close to the approximation quality of the classical $\hinf$ balanced truncation approach.
When the minimal solution $X_{\min}$ is used to represent the system, the reduced order model fails to approximate the full-order system and the error stagnates.
The canonical representation of the system leads to much better performance, but is still clearly outperformed by the representation based on $X_{\max}$.

In \Cref{fig:cmp_mhinf_P}, the same system is considered, and the results for the balancing method with~\eqref{hinf-mod-f2} and~\eqref{hinf-mod-c2} from \Cref{thm:mhinf-v2} are shown.
Except for the representation associated with $X_{\min}$, which is ommited from the figure, the same parameters are considered.
The matrix $P$ is chosen as $P= 0.001Q$, where $Q$ stems from the canonical representation of the \pH system.
We observe that the error significantly increases, and that the maximal representation still outperforms the canonical representation.

% H∞ performance
\begin{figure}
    \centering
    % This file was created by matlab2tikz.
%
%The latest updates can be retrieved from
%  http://www.mathworks.com/matlabcentral/fileexchange/22022-matlab2tikz-matlab2tikz
%where you can also make suggestions and rate matlab2tikz.
%
\definecolor{mycolor1}{rgb}{0.29020,0.48235,0.71765}%
\definecolor{mycolor2}{rgb}{0.99608,0.85490,0.54510}%
\definecolor{mycolor3}{rgb}{0.86667,0.23922,0.17647}%
\begin{tikzpicture}

\begin{axis}[%
width=.7\linewidth,
height=.45\linewidth,
at={(0in,0in)},
scale only axis,
xmin=1,
xmax=4,
xlabel style={font=\color{white!15!black}\small},
xlabel={$\gamma$},
ymin=0,
ymax=4,
ylabel style={font=\color{white!15!black}\small},
ylabel={$\|T_{zw}\|_{\mathcal{H}_\infty}$},
axis background/.style={fill=white},
title style={font=\bfseries\small},
title={},
legend style={legend cell align=left, align=left, draw=white!15!black, font=\footnotesize, at={(0.936,0.9)},anchor=north east}
]
\addplot [color=mycolor1, line width=1.4pt]
  table[row sep=crcr]{%
1.05	1.01373273953898\\
1.15	1.05889480267131\\
1.25	1.09163990891203\\
1.35	1.11625420890767\\
1.45	1.13528875917276\\
1.55	1.15034958049049\\
1.65	1.16249360946787\\
1.75	1.17244239198944\\
1.85	1.18070383813368\\
1.95	1.18764485728286\\
2.05	1.19353643670267\\
2.15	1.19858182318182\\
2.25	1.20293562739682\\
2.35	1.20672020909484\\
2.45	1.21003262930528\\
2.55	1.21295124352099\\
2.65	1.21554440538392\\
2.75	1.21784823037658\\
2.85	1.21989131310539\\
2.95	1.22173575318453\\
3.05	1.22339739041552\\
3.15	1.2248997385589\\
3.25	1.22626262986165\\
3.35	1.22750288858173\\
3.45	1.22863486469075\\
3.55	1.22967085989336\\
3.65	1.23062147013847\\
3.75	1.23149586287388\\
3.85	1.23230200292234\\
3.95	1.23304683761531\\
};
\addlegendentry{modified $\mathcal{H}_{\infty}$ approach ($P = 0$)}

\addplot [color=mycolor2, line width=1.4pt]
  table[row sep=crcr]{%
1.05	0.637624130571316\\
1.15	0.63910116281937\\
1.25	0.640140624493254\\
1.35	0.640904269610207\\
1.45	0.641484226727121\\
1.55	0.6419364511722\\
1.65	0.642296733927832\\
1.75	0.642588942726773\\
1.85	0.642829549071597\\
1.95	0.643030251090889\\
2.05	0.643199561949768\\
2.15	0.643343808449322\\
2.25	0.643467779252146\\
2.35	0.643575157503097\\
2.45	0.643668816649881\\
2.55	0.643751027100666\\
2.65	0.643823603369604\\
2.75	0.643888010650385\\
2.85	0.64394544319961\\
2.95	0.643996882794372\\
3.05	0.644043142884973\\
3.15	0.644084902331512\\
3.25	0.644122731457015\\
3.35	0.644157112364887\\
3.45	0.644188454927458\\
3.55	0.644217109473984\\
3.65	0.644243376938317\\
3.75	0.644267517034117\\
3.85	0.644289754885993\\
3.95	0.644310286442583\\
};
\addlegendentry{classical $\mathcal{H}_{\infty}$ approach}

\addplot [color=mycolor3, line width=1.4pt]
  table[row sep=crcr]{%
1.05	1.05\\
1.15	1.15\\
1.25	1.25\\
1.35	1.35\\
1.45	1.45\\
1.55	1.55\\
1.65	1.65\\
1.75	1.75\\
1.85	1.85\\
1.95	1.95\\
2.05	2.05\\
2.15	2.15\\
2.25	2.25\\
2.35	2.35\\
2.45	2.45\\
2.55	2.55\\
2.65	2.65\\
2.75	2.75\\
2.85	2.85\\
2.95	2.95\\
3.05	3.05\\
3.15	3.15\\
3.25	3.25\\
3.35	3.35\\
3.45	3.45\\
3.55	3.55\\
3.65	3.65\\
3.75	3.75\\
3.85	3.85\\
3.95	3.95\\
};
\addlegendentry{theoretical bound}

\end{axis}
\end{tikzpicture}%
    \caption{Comparison of the $\mathcal{H}_{\infty}$ performance of the modified $\mathcal{H}_{\infty}$ controller ($P\,=\,0$) and the classical $\hinf$ controller for a mass-spring-damper system of dimension $n\,=\,10$.}
    \label{fig:hinf-performance-msd}
\end{figure}
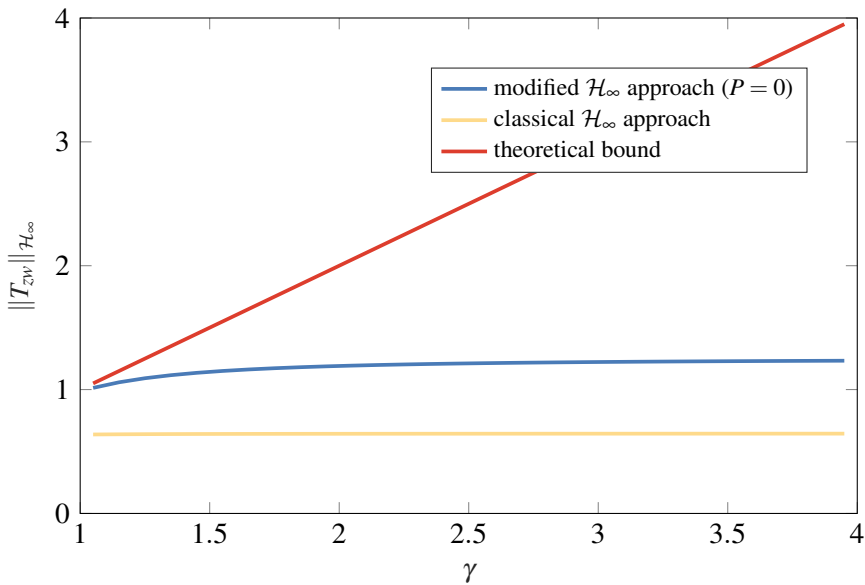

\begin{figure}
    \centering
    % This file was created by matlab2tikz.
%
%The latest updates can be retrieved from
%  http://www.mathworks.com/matlabcentral/fileexchange/22022-matlab2tikz-matlab2tikz
%where you can also make suggestions and rate matlab2tikz.
%
\definecolor{mycolor1}{rgb}{0.29020,0.48235,0.71765}%
\definecolor{mycolor2}{rgb}{0.59608,0.79216,0.88235}%
\definecolor{mycolor3}{rgb}{0.99608,0.85490,0.54510}%
\definecolor{mycolor4}{rgb}{0.86667,0.23922,0.17647}%
\begin{tikzpicture}

\begin{axis}[%
width=.7\linewidth,
height=.45\linewidth,
at={(0in,0in)},
scale only axis,
xmin=1,
xmax=4,
xlabel style={font=\color{white!15!black}\small},
xlabel={$\gamma$},
ymin=0,
ymax=4,
ylabel style={font=\color{white!15!black}\small},
ylabel={$\|T_{zw}\|_{\mathcal{H}_\infty}$},
axis background/.style={fill=white},
title style={font=\bfseries\small},
title={},
legend style={legend cell align=left, align=left, draw=white!15!black, font=\footnotesize, at={(0.936,0.9)},anchor=north east}
]
\addplot [color=mycolor1, line width=1.4pt]
  table[row sep=crcr]{%
1.05	0.894566290669567\\
1.15	0.913742142946904\\
1.25	0.927967951879483\\
1.35	0.938859900356202\\
1.45	0.947407397482827\\
1.55	0.954250455762175\\
1.65	0.959820846255413\\
1.75	0.96441976609459\\
1.85	0.96826317986604\\
1.95	0.971509557614174\\
2.05	0.974277503749938\\
2.15	0.976657323833144\\
2.25	0.978718822475562\\
2.35	0.980516685112345\\
2.45	0.982094269297412\\
2.55	0.98348504694991\\
2.65	0.984718457314986\\
2.75	0.985817986767812\\
2.85	0.986802188199888\\
2.95	0.987686657702675\\
3.05	0.988484457245501\\
3.15	0.989206579097023\\
3.25	0.989862324369051\\
3.35	0.990459605362505\\
3.45	0.991005187256691\\
3.55	0.991504882269581\\
3.65	0.991963706559008\\
3.75	0.992386007779416\\
3.85	0.992775569391619\\
3.95	0.993135696442742\\
};
\addlegendentry{modified $\mathcal{H}_{\infty}$ approach ($P = 0$)}

\addplot [color=mycolor2, line width=1.4pt]
  table[row sep=crcr]{%
1.05	1.04995367173846\\
1.15	1.14713143399329\\
1.25	1.23412779310176\\
1.35	1.30659413608618\\
1.45	1.36474290625142\\
1.55	1.41102879031574\\
1.65	1.44817455608065\\
1.75	1.4783995251339\\
1.85	1.50334683351241\\
1.95	1.52420769988016\\
2.05	1.54185183728151\\
2.15	1.55692460186021\\
2.25	1.5699135373474\\
2.35	1.58119329034505\\
2.45	1.59105620883666\\
2.55	1.59973355288857\\
2.65	1.60741048371816\\
2.75	1.6142368559302\\
2.85	1.6203351232555\\
2.95	1.62580622100536\\
3.05	1.6307340050214\\
3.15	1.63518864399895\\
3.25	1.63922924175179\\
3.35	1.64290588534044\\
3.45	1.64626125995672\\
3.55	1.64933193327958\\
3.65	1.65214938512557\\
3.75	1.65474083900827\\
3.85	1.65712993832385\\
3.95	1.65933729970612\\
};
\addlegendentry{modified $\mathcal{H}_{\infty}$ approach ($P = Q$)}

\addplot [color=mycolor3, line width=1.4pt]
  table[row sep=crcr]{%
1.05	0.435075081728534\\
1.15	0.435185854846004\\
1.25	0.435267032614333\\
1.35	0.435328500714905\\
1.45	0.435376275921747\\
1.55	0.435414210605921\\
1.65	0.435444874055092\\
1.75	0.435470038388335\\
1.85	0.435490960846405\\
1.95	0.435508555070302\\
2.05	0.435523498910599\\
2.15	0.435536304455615\\
2.25	0.435547364776435\\
2.35	0.43555698588441\\
2.45	0.435565409045146\\
2.55	0.4355728266546\\
2.65	0.435579393726634\\
2.75	0.435585236332394\\
2.85	0.435590457886362\\
2.95	0.435595143887676\\
3.05	0.435599365537649\\
3.15	0.435603182529172\\
3.25	0.435606645218608\\
3.35	0.43560979633221\\
3.45	0.435612672318079\\
3.55	0.435615304425675\\
3.65	0.435617719574064\\
3.75	0.435619941055029\\
3.85	0.435621989106082\\
3.95	0.435623881380269\\
};
\addlegendentry{classical $\mathcal{H}_{\infty}$ approach}

\addplot [color=mycolor4, line width=1.4pt]
  table[row sep=crcr]{%
1.05	1.05\\
1.15	1.15\\
1.25	1.25\\
1.35	1.35\\
1.45	1.45\\
1.55	1.55\\
1.65	1.65\\
1.75	1.75\\
1.85	1.85\\
1.95	1.95\\
2.05	2.05\\
2.15	2.15\\
2.25	2.25\\
2.35	2.35\\
2.45	2.45\\
2.55	2.55\\
2.65	2.65\\
2.75	2.75\\
2.85	2.85\\
2.95	2.95\\
3.05	3.05\\
3.15	3.15\\
3.25	3.25\\
3.35	3.35\\
3.45	3.45\\
3.55	3.55\\
3.65	3.65\\
3.75	3.75\\
3.85	3.85\\
3.95	3.95\\
};
\addlegendentry{theoretical bound}

\end{axis}
\end{tikzpicture}%
    \caption{Comparison of the $\mathcal{H}_{\infty}$ performance of the modified $\mathcal{H}_{\infty}$ controller ($P\,=\,0$ and $P\,=\,Q$) and the classical $\hinf$ controller for a model of a DC motor.}
    % {\rot fix this!} }
    \label{fig:hinf-performance-dc}
\end{figure}
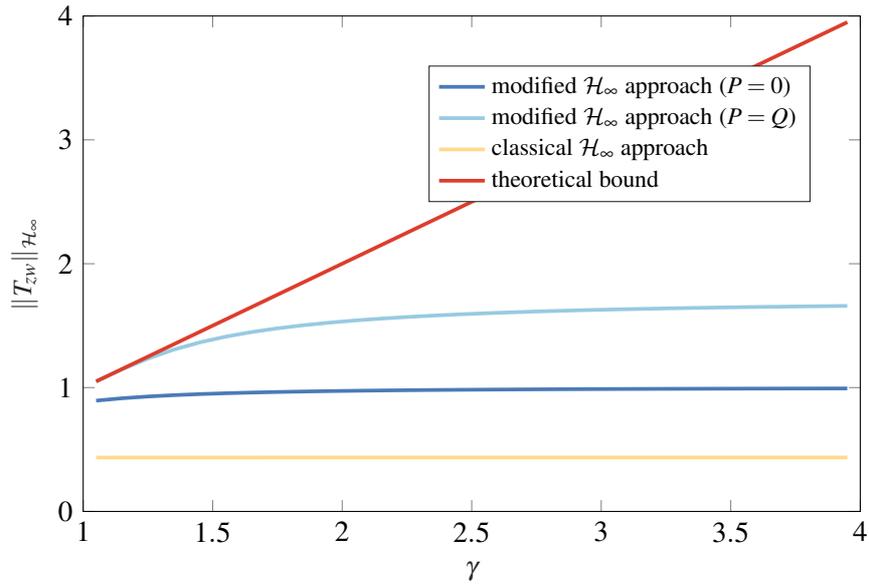

% P = 0
\begin{figure}[ht!] % done!
    \centering
    % This file was created by matlab2tikz.
%
%The latest updates can be retrieved from
%  http://www.mathworks.com/matlabcentral/fileexchange/22022-matlab2tikz-matlab2tikz
%where you can also make suggestions and rate matlab2tikz.
%
\definecolor{mycolor1}{rgb}{0.29020,0.48235,0.71765}%
\definecolor{mycolor2}{rgb}{0.59608,0.79216,0.88235}%
\definecolor{mycolor3}{rgb}{0.99608,0.85490,0.54510}%
\definecolor{mycolor4}{rgb}{0.86667,0.23922,0.17647}%
\begin{tikzpicture}

\begin{semilogyaxis}[%
width=.7\linewidth,
height=.45\linewidth,
at={(0in,0in)},
scale only axis,
xmin=0,
xmax=40,
xlabel style={font=\color{white!15!black}\small},
xlabel={reduced system dimension $r$},
ymode=log,
ymin=8.49535387982952e-07,
ymax=1.45796462036564,
yminorticks=true,
ylabel style={font=\color{white!15!black}\small},
ylabel={$\|G-G_r\|_{\mathcal{H}_\infty}$},
axis background/.style={fill=white},
title style={font=\bfseries\small},
title={},
legend style={legend cell align=left, align=left, draw=white!15!black, font=\footnotesize, at={(0.936,0.9)},anchor=north east}
]
\addplot [color=mycolor1, line width=1.4pt]
  table[row sep=crcr]{%
2	0.738536244230906\\
4	0.326205055822555\\
6	0.276648284603498\\
8	0.18171753482425\\
10	0.132479005758916\\
12	0.0998343442338179\\
14	0.0744917576545375\\
16	0.054361340108815\\
18	0.0385759089272507\\
20	0.0269212496680331\\
22	0.0191841079416415\\
24	0.0137411795263631\\
26	0.00990095984302574\\
28	0.00705946379399129\\
30	0.00496699786947599\\
32	0.00345687011659769\\
34	0.00240400608583564\\
36	0.00168309311193405\\
38	0.00118415355853839\\
40	0.000831512516182473\\
};
\addlegendentry{modified $\mathcal{H}_\infty$ approach}

\addplot [color=mycolor2, line width=1.4pt]
  table[row sep=crcr]{%
2	1.45796462036564\\
4	1.42602485333101\\
6	1.38083956958252\\
8	1.32743011430961\\
10	1.26354235832982\\
12	1.20028397726424\\
14	1.13296346777127\\
16	1.13284533196086\\
18	1.13250638288909\\
20	1.09772310649379\\
22	1.07515072344695\\
24	1.06968930876809\\
26	1.06418982871952\\
28	1.03066573484071\\
30	1.00870807134063\\
32	1.00450039111315\\
34	0.961684649612224\\
36	0.941084757314632\\
38	0.935949656554745\\
40	0.893888482236671\\
};
\addlegendentry{modified $\mathcal{H}_\infty$ approach ($Q = X_{\mathrm{min}}$)}

\addplot [color=mycolor3, line width=1.4pt]
  table[row sep=crcr]{%
2	0.592433577144932\\
4	0.209859374576117\\
6	0.0936048531912181\\
8	0.0277029016517329\\
10	0.00868934242836131\\
12	0.00076648471019816\\
14	0.000226360645276524\\
16	3.8102282275558e-05\\
18	8.24430057520352e-06\\
20	8.06133277227016e-06\\
22	7.99127769913721e-06\\
24	7.72538485923877e-06\\
26	5.65945716799643e-06\\
28	6.02511145940424e-06\\
30	5.55760004043141e-06\\
32	5.49970594653645e-06\\
34	5.5329880416609e-06\\
36	5.52106480380101e-06\\
38	5.32859466263777e-06\\
40	5.36154161021165e-06\\
};
\addlegendentry{modified $\mathcal{H}_\infty$ approach ($Q = X_{\mathrm{max}}$)}

\addplot [color=mycolor4, line width=1.4pt]
  table[row sep=crcr]{%
2	0.366414869748339\\
4	0.153746811256731\\
6	0.0423951621783138\\
8	0.0134269925465447\\
10	0.00391416407730551\\
12	0.000380008987217835\\
14	0.000106707552257489\\
16	3.02024842825538e-05\\
18	2.91757752916411e-06\\
20	2.10802825058454e-06\\
22	2.59494065378672e-06\\
24	2.62347291255324e-06\\
26	2.19933051812333e-06\\
28	1.79822321108481e-06\\
30	1.6092451801376e-06\\
32	1.42264230808603e-06\\
34	1.25382636074091e-06\\
36	1.10349934846989e-06\\
38	9.6877336428096e-07\\
40	8.49535387982952e-07\\
};
\addlegendentry{classical $\mathcal{H}_\infty$ approach}

\end{semilogyaxis}
\end{tikzpicture}%
    \caption{Comparison of different choices of the Hamiltonian for MOR using modified $\hinf$ balanced truncation for a mass-spring-damper system of dimension $n=200$ (numerically minimal dimension $79$). We chose $P=0$ and added classical $\hinf$ balanced truncation for comparison.}
    \label{fig:cmp_mhinf}
\end{figure}
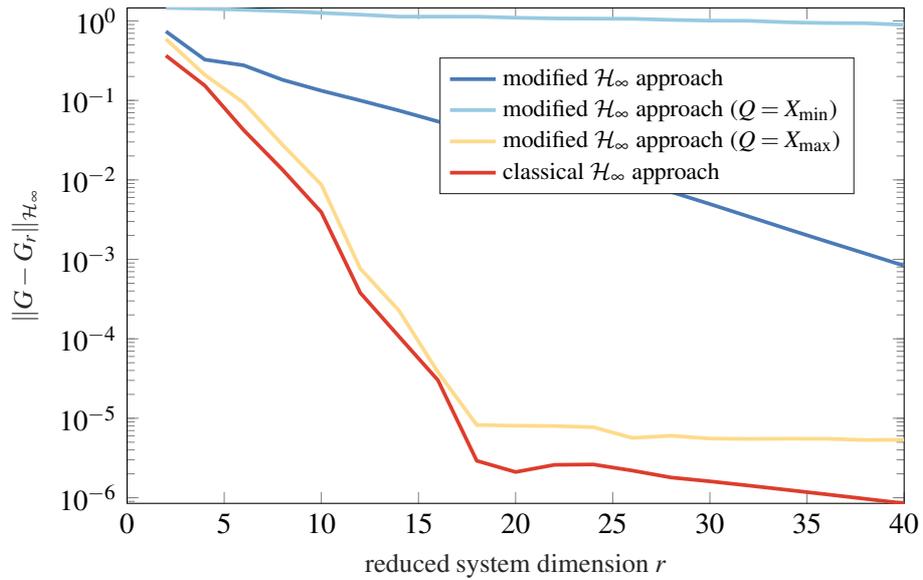

% P = small
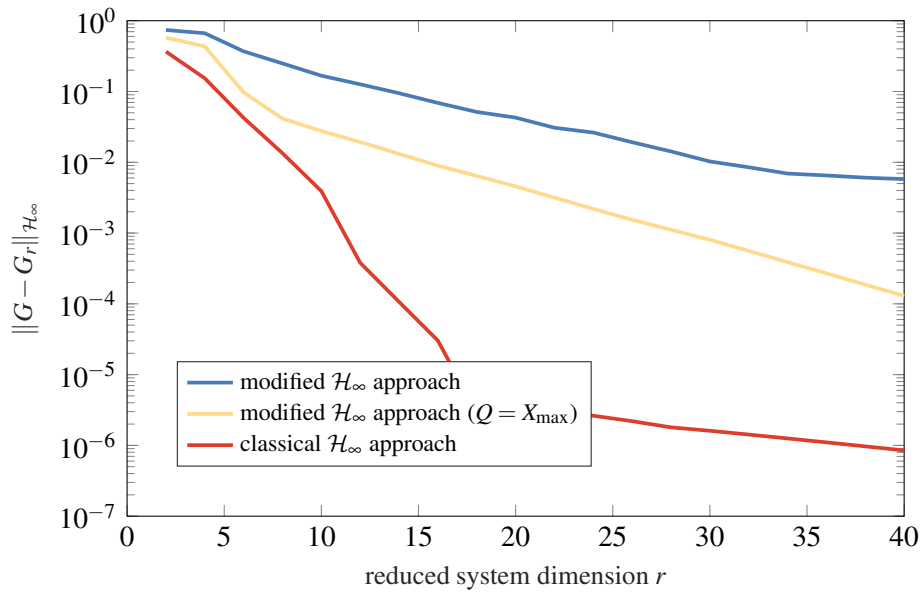
\begin{figure}[ht!] % done
    \centering
    % This file was created by matlab2tikz.
%
%The latest updates can be retrieved from
%  http://www.mathworks.com/matlabcentral/fileexchange/22022-matlab2tikz-matlab2tikz
%where you can also make suggestions and rate matlab2tikz.
%
\definecolor{mycolor1}{rgb}{0.29020,0.48235,0.71765}%
\definecolor{mycolor2}{rgb}{0.99608,0.85490,0.54510}%
\definecolor{mycolor3}{rgb}{0.86667,0.23922,0.17647}%
\begin{tikzpicture}

\begin{semilogyaxis}[%
width=.7\linewidth,
height=.45\linewidth,
at={(0in,0in)},
scale only axis,
xmin=0,
xmax=40,
xlabel style={font=\color{white!15!black}\small},
xlabel={reduced system dimension $r$},
ymode=log,
ymin=1e-07,
ymax=1,
yminorticks=true,
ylabel style={font=\color{white!15!black}\small},
ylabel={$\|G-G_r\|_{\mathcal{H}_\infty}$},
axis background/.style={fill=white},
title style={font=\bfseries\small},
title={},
legend style={legend cell align=left, align=left, draw=white!15!black, font=\footnotesize, at={(0.064,0.1)},anchor=south west}
]
\addplot [color=mycolor1, line width=1.4pt]
  table[row sep=crcr]{%
2	0.739802337098825\\
4	0.665617440433436\\
6	0.369250727869016\\
8	0.249157464471809\\
10	0.16722495147171\\
12	0.126365986510025\\
14	0.0945899156355544\\
16	0.0689905521350223\\
18	0.0514287665459673\\
20	0.0427378921152928\\
22	0.0307648825101209\\
24	0.0262812412003594\\
26	0.0191938054556274\\
28	0.0142269543247708\\
30	0.0102621844174226\\
32	0.00849976617497522\\
34	0.00692843679867739\\
36	0.006510260245412\\
38	0.00606019795921853\\
40	0.00578476017276581\\
};
\addlegendentry{modified $\mathcal{H}_\infty$ approach}

\addplot [color=mycolor2, line width=1.4pt]
  table[row sep=crcr]{%
2	0.576968548814318\\
4	0.433815296382372\\
6	0.0971606474264527\\
8	0.0414163895076008\\
10	0.0276881550957436\\
12	0.0193048878552972\\
14	0.0131659327468738\\
16	0.00892653488621224\\
18	0.00642565411871517\\
20	0.00456545847590797\\
22	0.00316687878237748\\
24	0.00219407697082828\\
26	0.00153867521222716\\
28	0.00111357085744315\\
30	0.000802557682972953\\
32	0.000559395864624678\\
34	0.000388832805508585\\
36	0.000269770662705493\\
38	0.000186706520818232\\
40	0.00012938170166151\\
};
\addlegendentry{modified $\mathcal{H}_\infty$ approach ($Q = X_{\mathrm{max}}$)}

\addplot [color=mycolor3, line width=1.4pt]
  table[row sep=crcr]{%
2	0.366414869748339\\
4	0.153746811256731\\
6	0.0423951621783138\\
8	0.0134269925465447\\
10	0.00391416407730551\\
12	0.000380008987217835\\
14	0.000106707552257489\\
16	3.02024842825538e-05\\
18	2.91757752916411e-06\\
20	2.10802825058454e-06\\
22	2.59494065378672e-06\\
24	2.62347291255324e-06\\
26	2.19933051812333e-06\\
28	1.79822321108481e-06\\
30	1.6092451801376e-06\\
32	1.42264230808603e-06\\
34	1.25382636074091e-06\\
36	1.10349934846989e-06\\
38	9.6877336428096e-07\\
40	8.49535387982952e-07\\
};
\addlegendentry{classical $\mathcal{H}_\infty$ approach}

\end{semilogyaxis}
\end{tikzpicture}%
    \caption{Comparison of different choices of the Hamiltonian for MOR using modified $\hinf$ balanced truncation for the mass-spring-damper system of dimension $n = 200$ (numerically minimal dimension $79$). We chose $P=0.001Q$. The minimal solution $X_{\min}$ was not considered.}
    \label{fig:cmp_mhinf_P}
\end{figure}

\section{Conclusion}\label{sec:conclusion}
In this paper, we propose a new method for the design of \portHamiltonian controllers with a guaranteed $\hinf$ bound for the closed loop transfer function.
To achieve this goal, we propose modifications of the algebraic Riccati equations used in classical $\hinf$ controller design.
Based on the modified algebraic Riccati equations, we additionally develop a structure preserving model reduction method.
Using numerical experiments, we illustrate that the approximation quality of the reduced order model depends on the chosen representation of the \portHamiltonian system, and that the maximal solution of the associated KYP-LMI appears to be best suited for the purpose of model reduction.

A natural first step for future research is to establish an a priori error bound for the presented $\hinf$ balanced truncation method.
With such an error bound, the effect of the choice of the matrix~$P$, the parameter $\gamma$ and the Hamiltonian $Q$ on the approximation quality can be studied in more detail.
Furthermore, to make the presented modified approaches feasible, future research should explore efficient and robust implementations, for example to compute the maximal solution~$X_{\max}$.

\subsection*{Acknowledgements} %vbox to prevent page break
\vbox{We thank the Deutsche Forschungsgemeinschaft for their support within the project B03 in the Sonderforschungsbereich/Transregio 154 ``Mathematical Modelling, Simulation and Optimization using the Example of Gas Networks''.}

\bibliographystyle{siam}
\bibliography{references}

\end{document}